\numberwithin{equation}{section}
\newtheorem{theorem}{Theorem}[section]
\newtheorem{lemma}[theorem]{Lemma}
\newtheorem{remark}[theorem]{Remark}
\DeclareMathOperator*{\argmax}{arg\,max}
\newcommand{\E}{\mathbb{E}}
\newcommand{\N}{\mathbb{N}}
\newcommand{\R}{\mathbb{R}}
\newcommand*{\boldone}{\text{\usefont{U}{bbold}{m}{n}1}}
\title{Asymptotic behavior of the extremal position in a multi-type branching random walk with heavy-tailed displacements}
\author{Krzysztof Kowalski\footnote{University of Wrocław, Poland. Email: krzysztof.kowalski@math.uni.wroc.pl. Supported by the National Science Center, Poland (OPUS, grants 2019/33/B/ST1/00207 and 2020/39/B/ST1/00209)
\\ MSC2020 subject classification: 60J80, 60F05, 60F15
\\ keywords: branching random walk, maximal position, central limit theorem, heavy-tailed distributions}}
\begin{document}
\maketitle
\begin{abstract}
    We consider a multi-type branching random walk with displacements that have either regularly varying or semi-exponential tails. We investigate the behavior of the rightmost particle in irreducible and reducible regimes and identify the correct normalization for different settings. 
\end{abstract}

\tableofcontents

\section{Introduction}

\subsection{Branching random walks}Formally, a branching random walk, abbreviated as BRW, is constructed as follows. The process starts with a single particle placed at the origin. Given a point process ${\mathcal Z} = \sum_{k=1}^N \delta_{\xi_k}$ on $\R$, where $N$, denoting the size of the offspring, is a random variable on $\mathbb{N}_0$, the original particle at time 1 dies and gives birth to $N$ particles positioned according to ${\mathcal Z}$. 
These particles are called the first generation of the process. At time 2, each of these particles reproduces independently
and has offspring with positions relative to their parents' position given by an independent copy of ${\mathcal Z}$. 

The process continues infinitely. As a result, we obtain a marked tree $\left(S, \mathbb{T}\right)$, where the tree $\mathbb{T}$ is the set of all particles equipped with the natural tree structure, and $S_v$ is the position of a given particle $v \in \mathbb{T}$.

We write $|v|$ for the generation of $v$ and $m=\mathbb{E}[N]$ for the mean number of offspring. For a BRW with displacements given by $\xi$,
let \[R_n = \sup_{|v|=n} S_v\] denote the position of the most right particle at time $n$. The asymptotic behavior of $R_n$ is most commonly studied under the following exponential moment assumption:
\begin{align}\label{ExponentialMoment}
    \text{there exists } \theta >0, \text{ such that } \mathbb{E}\left[ \sum_{i=1}^n e^{\theta \xi_i} \right] < \infty.
\end{align} 

Under \eqref{ExponentialMoment}, Biggins \cite{Biggins1976} proved in 1976 the law of large numbers for $R_n$,  i.e.  $\frac {R_n} n$ converges almost surely to a constant $\alpha$.

The corresponding second order limit theorem was proved by Aïdékon \cite{Aidekon2013} in 2013, who showed that for some $c>0$, $R_n - \alpha  n + c \log n $ converges in distribution to a random shift of the Gumbel distribution.

The assumption \eqref{ExponentialMoment} is critical to the linear growth of $R_n$. Durrett \cite{Durrett1983} showed in 1983 that if one assumes instead that the displacements have regularly varying tails, $R_n$ grows exponentially fast. More specifically, assume that for some slowly varying $L$ and some $r>0$, we have
\begin{align}\label{tailsdurrett}
    \mathbb{P}\left(\xi > x\right) \sim L(x) x^{-r} \hspace{0.5cm} \text{as }x \rightarrow \infty
\end{align}
and
\begin{align}\label{tailsdurrett2}
     \log(-x)\mathbb{P}\left(\xi \leq x\right)\rightarrow 0 , \hspace{0.5cm}  \text{as }x \rightarrow -\infty.
\end{align}
Then
\[
\mathbb{P} ( R_n \leq a_n x)  \xrightarrow[ n \rightarrow \infty]{} \E[e^{-c Wx^{-r}}]
\]
where $c>0$ is a constant, $W$ is a random variable depending on the underlying Galton-Watson process, and $\{a_n\}_{n \in \N}$ satisfies 
\begin{equation}\label{an}
    m^n \mathbb{P}\left(\xi > a_n\right) \xrightarrow[ n \rightarrow \infty]{} 1.
\end{equation}

Another model present in the literature considers displacements with semi-exponential tails. Assume that for some slowly varying $a$, $L$ and $r \in (0,1)$, 
\begin{equation*}
    \mathbb{P}\left(\xi > x\right) = a(x) \exp\{-L(x)x^{r}\}.
\end{equation*}
Then, according to Gantert \cite{Gantert2000}, 

\begin{equation*}
 \frac {R_n} {b_n} \xrightarrow[ n \rightarrow \infty]{a.s.} (\log m)^{\frac 1 r},   
\end{equation*}
where $b_n$ satisfies
\begin{equation}\label{eq:gantert}
\frac {L(b_n)b_n^{r}} n \xrightarrow[ n \rightarrow \infty]{} 1.
\end{equation}
This model was further explored in a series of papers by Dyszewski, Gantert, and Höfelsauer in the context of large deviations \cite{Dyszewski2020}, extremal point process \cite{Dyszewski2022} and second-order fluctuations \cite{Dyszewski2023}.

\subsection{Multi-type branching random walks}
 Multi-type branching random walks extend the ideas of one-type branching processes to a multidimensional setting, which is necessary to model various phenomena, such as cell population dynamics with different phenotypes. Formally, a multi-type branching random walk is constructed analogously to the one-type model. Take a set of types $\mathcal{C} = \{1, 2, \ldots, d\}$ and a corresponding family of point processes $\{\mathcal{Z}_{ij}\}_{i,j \in \mathcal{C}}$, where $\mathcal{Z}_{ij} = \sum_{k=1}^{N_{i,j}}\delta_{\xi^{j}_k}$, and for each $j \in \mathcal{C}$, $\{\xi_k^j\}_{k \in \mathbb{N}}$ are marginally identically distributed. We start with a single particle of any given type $i$ placed at the origin. For each $j \in \mathcal{C}$, this particle gives birth to $N_{i,j}$ children of type $j$, positioned according to $\mathcal{Z}_{ij}$, and subsequently dies. At time 2, each particle of type $j$ reproduces independently according to copies of $\{\mathcal{Z}_{jk}\}_{k \in \mathcal{C}}$, and subsequently dies. The process continues infinitely.

In this case, the number of offspring depends on the type of parent, but the displacement of a particle depends only on its own type. We write $Z_n = \{Z^1_n, Z^2_n, \dots, Z^d_n\}$ for the d-dimensional Galton-Watson process recording the number of particles of each type in the $n$-th generation and define the mean matrix $M = (\mathbb{E}[N_{i,j}])_{i,j \in \mathcal{C}}$. Since all entries in $M$ are nonnegative, it has the principal (although possibly not unique) eigenvalue that we denote by $\rho$. We utilize the one-type notation and define $\sigma(v) = i$ whenever $v$ belongs to type $i$. Our main point of interest is again the asymptotic behavior of the maximum position $R_n$. 

In the multi-type model, one needs to distinguish between two significantly different regimes. We call the process \textbf{irreducible} if a particle of any given type can appear in any line of descent with positive probability and \textbf{reducible} otherwise. In terms of the mean matrix $M$, irreducibility translates to the following statement: for any $i,j \in \mathcal{C}$, there exists $n \in \mathbb{N}$, such that $M^n(i,j) > 0$. The previous results on the multi-type model under the exponential moment assumption go back to Biggins \cite{Biggins1976}, who showed in 1976 that the irreducible model exhibits linear growth and described the limiting constant. The reducible case proved to be more challenging. Weinberger et al.\cite{Weinberger2002} in 2002 argued that the spreading speed should be the maximum of speeds of the types considered separately, essentially ignoring the interaction between types. A flaw in this argument was identified by Weinberger et al. \cite{Weinberger2007} in 2007, and the correct limiting constant was ultimately described by Biggins \cite{Biggins2012} in 2012. As it turns out, the interplay between the types can significantly increase the growth speed. This effect was called \textbf{ anomalous spreading} in Weinberger et al. \cite{Weinberger2007} and makes the study of reducible models particularly appealing.

Bhattacharya, Maulik, Palmowski and Roy \cite{Palmowski} in 2019 considered an irreducible model with displacements having regularly varying tails. They showed the convergence of the extremal process to a randomly scaled scale-decorated Poisson point process and, as a result, obtained a limit theorem for the maximum position. In this case, it turns out that the behavior is analogous to the one-type model considered in \cite{Durrett1983}, with the largest eigenvalue of $M$ replacing the mean number of offspring and the heaviest tail dominating the lighter ones. 

In this paper, we provide a corresponding result for the reducible case and describe the complete asymptotics of the extremal position in the previously unstudied multi-type model with semi-exponentially tailed displacements.

\section{Irreducible multi-type branching random walk}

In this chapter, we present the results on the irreducible branching random walks. We adopt the notation introduced in the previous chapter and denote \[\mathbb{P}_i(\cdot)=\mathbb{P}\left(\cdot \ | \ \text{initial particle is of type } i \right), \]
and $\mathbb{E}_i$ for the expectation with respect to $\mathbb{P}_i$. Whenever the index is omitted, we assume that the initial particle is of type 1.
Throughout this chapter, we make the following assumptions on the underlying Galton-Watson process.
Firstly, there exists $l \in \mathbb{N}_+$ such that 
\begin{equation}\label{as3}
    M^l(i,j) > 0 \text{ for all } i,j \in \mathcal{C}.
\end{equation} 
This assumption guarantees irreducibility, and through the Perron-Frobenius theorem, it asserts that $\rho$, the principal eigenvalue of $M$, is simple. We also assume \begin{equation}\label{as1} 
    \rho > 1,
\end{equation} 
ensuring that the process survives with positive probability (see \cite{harris}, Theorem 7.1). Finally, we assume that the Kesten-Stigum condition,
\begin{equation}\label{as2}
    \mathbb{E}_i[Z_1^j \log Z_1^j] < \infty,
\end{equation}
holds for all $i,j \in \mathcal{C}$.
Under these assumptions, the well known Kesten and Stigum theorem \cite{kesten} asserts that for any $i \in \mathcal{C}$, \begin{equation}\label{limit}
    \frac{Z_n} {\rho^n}  \rightarrow Wu \hspace{0.5cm} \mathbb{P}_i\text{ -a.s.},
\end{equation} where $W$ is a non-degenerate random variable and $u$ is the left eigenvector of $M$. It is a straightforward conclusion that 
\begin{equation}\label{limitscalar}
\frac{Z_n} {\rho^n} \cdot v \rightarrow W \hspace{0.5cm} \mathbb{P}_i\text{ -a.s.}
\end{equation}If we write $v$ for the right eigenvector of $M$, normalized so that $u\cdot v = 1$, then we also have
\begin{equation}\label{heavytailmean}
        \mathbb{E}_i[W] = v_i.
\end{equation}
 To avoid conditioning on the survival set, we assume $  \mathbb{P}(  Z_n \rightarrow 0) = 0$. 

\subsection{Displacements with regularly varying tails}

Let $F_i(x) = \mathbb{P}\left(\xi^i \leq x\right)$. In this section, we assume that the displacements are independent and there exist slowly varying functions $\{L_i\}_{i \in \mathcal{C}}$ and positive constants $\{r_i\}_{i \in \mathcal{C}}$, satisfying
\begin{align}\label{tails1}
\begin{aligned}
     &1-F_i(x) \sim L_i(x) x^{-r_i} \hspace{0.5cm} \text{as }x \rightarrow \infty, \\
     &\log(-x)F_i(x)\rightarrow 0 , \hspace{0.5cm}  \text{as }x \rightarrow -\infty.
\end{aligned}
\end{align}
 These assumptions are a natural extension of the one-type case considered in \cite{Durrett1983}. For simplicity, we additionally assume the existence of a unique heaviest tail: There exists $I \in \mathcal{C}$, satisfying $r_I < r_j$ for all $j\neq I$. To simplify the notation, we write $r=r_I$. Our main result is the following theorem.

\begin{theorem}\label{t1} Let \[\zeta =u_I\sum_{j>0} \rho^{-j} \sum_{l \in \mathcal{C}}    \mathbb{P}_I\left(Z_j^l> 0\right).\] and choose the sequence $\{a_n\}_{n \in \N}$ so that 
\begin{equation}\label{t1:an}
    \rho^n \left(1-F_I(a_n)\right) \xrightarrow[ n \rightarrow \infty]{} 1.
\end{equation}
Then
\[
\mathbb{P} ( R_n \leq a_n x)  \xrightarrow[ n \rightarrow \infty]{} \E[e^{-\zeta Wx^{-q}}]
\]
\end{theorem}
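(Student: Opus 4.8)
The plan is to follow the classical one-type strategy of Durrett \cite{Durrett1983}, adapted to the multi-type setting by tracking which type carries the heaviest tail. The key probabilistic fact is that, because the tail of $\xi^I$ is regularly varying with the smallest index $r=r_I$, the event $\{R_n > a_n x\}$ is essentially caused by a single particle of type $I$ somewhere in the tree making one big jump of order $a_n$, while all other displacements (and all displacements of lighter-tailed types) are negligible on this scale. So the first step is a \emph{one-big-jump} decomposition: condition on the marked tree up to generation $n$, and for each particle $v$ with $\sigma(v)=I$ at some generation $j \le n$, estimate the probability that the displacement leading to $v$ (together with the subtree hanging off it, which may add a further $o(a_n)$) exceeds $a_n x$. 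A second-moment / union-bound argument should show that the probability of two or more such big jumps is negligible, so that
\[
\mathbb{P}(R_n > a_n x) = \sum_{j=1}^{n} \mathbb{E}\Bigl[\,\#\{v : |v|=j,\ \sigma(v)=I\}\,\Bigr]\,(1-F_I(a_n x))\,(1+o(1)),
\]
using independence of the displacement from the genealogy and $1-F_I(a_n x)\sim x^{-r}(1-F_I(a_n))$ by regular variation, plus \eqref{t1:an}.

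Next I would identify the constant. The expected number of type-$I$ particles at generation $j$ starting from type $1$ is $M^j(1,I)$, which grows like $\rho^j u_I v_1$ by Perron--Frobenius; but to get the \emph{random} limit $W$ rather than its mean one must not take expectations prematurely. Instead, condition on the whole tree $\mathbb{T}$: given the genealogy, the big-jump events attached to distinct type-$I$ particles are independent, so
\[
\mathbb{P}(R_n \le a_n x \mid \mathbb{T}) \approx \prod_{j=1}^{n} \bigl(1 - (1-F_I(a_n x))\bigr)^{\#\{v:|v|=j,\ \sigma(v)=I\}} \approx \exp\Bigl(-x^{-r}\rho^{-n}\sum_{j=1}^n \#\{v:|v|=j,\ \sigma(v)=I\}\Bigr),
\]
after using \eqref{t1:an}. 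It remains to show the random sum $\rho^{-n}\sum_{j\le n}\#\{v:|v|=j,\sigma(v)=I\}$ converges a.s.\ (or in probability) to $\zeta W / u_I \cdot u_I = \zeta W$; here one rewrites $\#\{v:|v|=j,\sigma(v)=I\}$ in terms of $Z_j$, uses \eqref{limit}--\eqref{limitscalar} so that $Z_j/\rho^j \to Wu$, and sums the geometric-type series $\sum_j \rho^{-(n-j)}(\cdots)$ — this is where the precise shape of $\zeta$, involving $u_I\sum_{j>0}\rho^{-j}\sum_l \mathbb{P}_I(Z_j^l>0)$, emerges (the inner probabilities account for the contribution of the finite subtree hanging off each big-jump particle, each of whose type-$l$ descendants could itself be the argmax up to a negligible shift; a branching-property / many-to-one computation over the Kesten--Stigum limit produces the stated series). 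Finally, take expectations over $\mathbb{T}$ and invoke dominated convergence to pass from the conditional statement to $\mathbb{E}[e^{-\zeta W x^{-r}}]$ — noting the exponent in the theorem should read $x^{-r}$, consistent with \eqref{t1:an} and the one-type result.

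The main obstacle, I expect, is making the one-big-jump heuristic rigorous uniformly in $n$: one must control (i) the contribution of the subtree below a big-jump particle — showing its maximal displacement is $o(a_n)$ with high enough probability, which requires a tail bound on $R_k$ for \emph{fixed} $k$ and careful summation against the growing number of particles; (ii) the lighter-tailed types, whose aggregate contribution must be shown to be $o_{\mathbb P}(a_n)$ despite their number growing like $\rho^n$ — this uses $r_j > r$ strictly and a union bound that just barely converges; and (iii) the lower-order terms from the negative tail, handled via \eqref{tails1} as in \cite{Durrett1983}. A secondary technical point is the interchange of the $n\to\infty$ limit with the infinite series defining $\zeta$, which should follow from uniform integrability granted by \eqref{as2} and the Kesten--Stigum theorem.
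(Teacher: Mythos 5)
Your overall route is the same as the paper's (reduce $R_n$ to the maximal single displacement along ancestral lines, compute a conditional product given the genealogy, invoke Kesten--Stigum and dominated convergence), but the step identifying the constant is wrong as written. Conditionally on the tree, a big jump made by a type-$I$ particle in generation $n-j$ influences $R_n$ only if that particle has descendants in generation $n$; jumps on lines of descent that die out before time $n$ are irrelevant for the maximum at time $n$. Your displayed exponent $\rho^{-n}\sum_{j\le n}\#\{v:|v|=j,\ \sigma(v)=I\}$ therefore counts too many particles, and moreover your claimed limit is false: since $Z^I_j/\rho^j\to Wu_I$, that sum converges a.s.\ to $u_I W\,\rho/(\rho-1)$, not to $\zeta W$. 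The survival thinning is precisely where the factor $\sum_{l}\mathbb{P}_I(Z^l_j>0)$ in $\zeta$ comes from: the correct count is $Y^I_n$, the number of type-$I$ particles having offspring in generation $n$, and the law of large numbers $Y^I_n/\rho^n\to\zeta W$ (the paper's Lemma \ref{totalpopulation}, obtained by decomposing $Y^I_n$ according to the generation of the particle and the nonextinction of its depth-$j$ subtree, then combining the Kesten--Stigum limit with a strong law over the subtrees) is a genuinely missing ingredient. Your parenthetical explanation --- that the inner probabilities account for a type-$l$ descendant of the big-jump particle being the argmax --- is not the mechanism; they are survival probabilities of the subtree below the displaced particle.

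Second, the comparison between $R_n$ and the maximal displacement (the rigorous one-big-jump step) is only listed by you as an expected obstacle, not carried out, and it constitutes the bulk of the paper's proof. The upper direction requires a bound, uniform in $n$, on $F^{y}_{\vec n}(\infty)-F^{y}_{\vec n}(a_n x)$ for truncated \emph{mixed} convolutions; the paper generalizes Durrett's exponential-moment bound to $e^{-hx}\prod_i R_i(h,y)^{n_i}$ and then closes with a first-moment estimate against $\mathbb{E}[|Z_n|]\asymp\rho^n$, which is what makes the union bound ``just barely converge.'' The lower direction requires showing that $n$ displacements conditioned to lie below $M_n$ cannot pull the leading particle down by $\varepsilon a_n$, using the condition $\log(-x)F_i(x)\to0$ together with the exponential growth of $a_n$, handled type by type. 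Without these estimates the proposal remains a plan rather than a proof. (Your observation that the exponent should read $x^{-r}$ rather than $x^{-q}$ is correct; that is a typo in the statement.)
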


\begin{remark}\label{t1:remark}\emph{
    As in the one-type case, the existence of $a_n$ satisfying \eqref{t1:an} is guaranteed by the result of de Bruijn \cite{DeBruijn}. If $L^\#$ is the de Bruijn conjugate of $L$, one can take $a_n = L^\#\left(\rho^{\frac n {r}}\right) \rho^{\frac n {r}}$. In particular, this guarantees that for any $\varepsilon > 0$, \begin{equation}
        \rho^{\frac n r (1-\varepsilon)} < a_n < \rho^{\frac n r (1+\varepsilon)}
    \end{equation}
    for sufficiently large $n$.
}\end{remark}

 \begin{remark} \emph{The result partly overlaps with Corollary 3.4 from \cite{Palmowski}, however there are several differences. We allow the existence of leaves in our tree, and we present a direct argument, in contrast to the result being a conclusion from the convergence of the extremal process. On the other hand, we assume independence of the displacements, as opposed to the more general notion of point processes converging in suitable topology.
}\end{remark}

\begin{proof}[Proof of Theorem \ref{t1}]

To begin, we present a lemma that characterizes the asymptotic behavior of the total population in an irreducible multi-type Galton-Watson process. For $i \in \mathcal{C}$, let \[
    Y^i_n = \left|\bigcup_{k =1}^{n}\left\{ v \in \mathbb{T}_k \ : \ \sigma(v) =i, (\exists w \in \mathbb{T}_n)(w_k = v) \right\}\right|
    \]
be the total number of particles of type $i$ that have offspring in the $n$-th generation.
\begin{lemma}\label{totalpopulation}
    Let  \[\zeta_i =u_i\sum_{j>0} \rho^{-j} \sum_{l \in \mathcal{C}}    \mathbb{P}_i\left(Z_j^l> 0\right).\] Then for all $i \in \mathcal{C}$,
    \[
    \frac{Y^i_n} {\rho^n}     \xrightarrow[ n \rightarrow \infty]{a.s.} \zeta_i W.
    \]

\end{lemma}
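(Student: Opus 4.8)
The natural approach is to express $Y_n^i$ as a sum over particles $v$ of type $i$ in generations $k = 1, \dots, n$ of an indicator that $v$ has descendants surviving to generation $n$, then exploit the branching structure to turn this into something governed by the Kesten–Stigum limit \eqref{limit}. Concretely, for a particle $v$ with $|v| = k$ and $\sigma(v) = i$, the event $\{(\exists w \in \mathbb{T}_n)(w_k = v)\}$ depends only on the subtree rooted at $v$, which is distributed as an independent copy of the process started from type $i$; the probability of this event, conditionally on the tree up to generation $k$, equals $\mathbb{P}_i(Z_{n-k} \neq 0)$, where $Z_{n-k} \neq 0$ means the population is nonzero (equivalently $\sum_l Z_{n-k}^l > 0$). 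So I would first establish, by a conditional first–and–second–moment argument (or a martingale/law of large numbers argument along the lines of \eqref{limit}), that $Y_n^i / \rho^n$ has the same almost sure limit as $\rho^{-n}\sum_{k=1}^n \sum_{|v|=k,\, \sigma(v)=i} \mathbb{P}_i(Z_{n-k}\neq 0)$.

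Next I would rewrite that deterministic-coefficient sum. Writing $Z_k^i$ for the number of type-$i$ particles in generation $k$ and reindexing with $j = n-k$, the quantity becomes $\rho^{-n}\sum_{j=0}^{n-1} Z_{n-j}^i \,\mathbb{P}_i(Z_j \neq 0) = \sum_{j=0}^{n-1} \rho^{-j}\,\mathbb{P}_i(Z_j\neq 0)\cdot \frac{Z_{n-j}^i}{\rho^{n-j}}$. By Kesten–Stigum \eqref{limit}, $Z_{n-j}^i/\rho^{n-j} \to W u_i$ almost surely as $n \to \infty$ for each fixed $j$. The series $\sum_{j\ge 0}\rho^{-j}\,\mathbb{P}_i(Z_j\neq 0)$ converges since $\rho > 1$ and the probabilities are bounded by $1$; moreover $\mathbb{P}_i(Z_j \neq 0) = \mathbb{P}_i(\exists l: Z_j^l > 0) \le \sum_l \mathbb{P}_i(Z_j^l > 0)$, and in fact by inclusion–exclusion one should check these agree in the relevant limiting combination — but here the stated constant $\zeta_i$ uses $\sum_l \mathbb{P}_i(Z_j^l>0)$, so I need to be careful: either the lemma intends $\mathbb{P}_i(Z_j \neq 0) = \mathbb{P}_i(Z_j^l > 0 \text{ for the unique surviving configuration})$ or there is a union-bound slack. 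I would reconcile this by noting that "having offspring in generation $n$" for a type-$i$ ancestor decomposes over the type $l$ of a designated descendant $w$, so $Y_n^i = \sum_l Y_n^{i,l}$ where $Y_n^{i,l}$ counts type-$i$ ancestors of some type-$l$ particle at generation $n$; the corresponding conditional probability is then $\mathbb{P}_i(Z_{n-k}^l > 0)$, and summing over $l$ produces exactly the $\sum_l \mathbb{P}_i(Z_j^l > 0)$ in $\zeta_i$.

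To make the interchange of limit and infinite sum rigorous, I would split the series at a large index $J$: the head $\sum_{j=0}^{J}$ converges term-by-term to $W u_i \sum_{j=0}^J \rho^{-j}\sum_l \mathbb{P}_i(Z_j^l>0)$ by \eqref{limit}, while the tail is controlled by a uniform (in $n$) bound on $\rho^{-n} Y_n^{i}$ restricted to ancestors in generations $\le n-J$, which one bounds by $\rho^{-J}\sup_m Z_m^i/\rho^m$ times a convergent constant — and $\sup_m Z_m^i/\rho^m < \infty$ a.s. since the normalized process converges. Letting $J \to \infty$ closes the argument. The main obstacle I anticipate is the first step: upgrading from the expected-value heuristic to genuine almost sure convergence of $Y_n^i/\rho^n$, since $Y_n^i$ is a sum over a random tree of dependent (nested) indicators. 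I expect this is handled either by a variance estimate using \eqref{as2} (Kesten–Stigum moment condition) showing $\mathrm{Var}(Y_n^i \mid \mathcal{F}_{n-J})$ is small relative to $\rho^{2n}$, or by directly comparing $Y_n^i$ to the martingale $Z_n^i/\rho^n$ via the layer decomposition above and invoking dominated convergence against the a.s.-finite envelope $\sup_m Z_m^i/\rho^m$.
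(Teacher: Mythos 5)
Your proposal follows essentially the same route as the paper: decompose $Y^i_n$ by the layer $j=n-k$ and by the descendant type $l$ (which is exactly how the constant $\zeta_i$ with $\sum_{l}\mathbb{P}_i(Z^l_j>0)$ arises), apply the Kesten--Stigum limit \eqref{limit} to $Z^i_{n-j}/\rho^{n-j}$ and a law of large numbers to the subtree indicators within each fixed layer, then truncate the series at a large index and control the tail by $\rho^{-N}(\rho-1)^{-1}$ times the a.s.\ finite envelope $\sup_m Z^i_m/\rho^m$. The only refinement worth noting is that the variance estimate you anticipate is unnecessary: for each fixed $j$ the indicators attached to distinct generation-$(n-j)$ particles of type $i$ are i.i.d., so the per-layer strong law of large numbers alone upgrades the expectation heuristic to almost sure convergence, which is precisely how the paper argues.
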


\begin{proof}
Observe
\[
Y^i_n = \sum_{j=0}^{n-1}\sum_{l \in \mathcal{C}} \sum_{k=1}^{Z^i_{n-j} }\boldone_{\{Z^l_j(i,k) > 0\}}
\]
where for any $j$ and $l$, $\{Z^l_j{(i,k)}\}_{k>0} $ are i.i.d. distributed as $Z^l_j$ under $\mathbb{P}_i$, and for $i_1\neq i_2$, $\{Z^l_j{(i_1,k)}\}_{k>0} $ are independent of $\{Z^l_j{(i_2,k)}\}_{k>0} $. Hence
\begin{align*}
    \frac {Y^i_n} {\rho^n} = \sum_{j=0}^{n-1} \rho^{-j}  \sum_{l \in \mathcal{C}}\frac {Z^i_{n-j}} {\rho^{n-j}}  \frac 1 {Z^i_{n-j}} \sum_{k=1}^{Z_{n-j}^i }\boldone_{\{Z^l_j{(i,k)}>0 \}}
\end{align*}
Denote $D_{n-j} = \frac {Z^i_{n-j}} {\rho^{n-j}} $ and $E^l_{n-j} = \frac 1 {Z^i_{n-j}} \sum_{k=1}^{Z_{n-j}^i }\boldone_{\{Z^l_j{(i,k)} > 0\}}$. For any fixed $j >0$, by the strong law of large numbers, $E^l_{n-j}  \xrightarrow[ n \rightarrow \infty]{a.s.} \mathbb{P}_i(Z^l_j > 0) $, and by \eqref{limit},  $D_{n-j}  \xrightarrow[ n \rightarrow \infty]{a.s.}Wu_i$.

Now fix $N>0$. Then for $n>N$
\begin{align*}
     \frac {Y^i_n} {\rho^n} &\leq \sum_{j=0}^{N} \rho^{-j}  \sum_{l \in \mathcal{C}} D_{n-j} E^l_{n-j} +  \sum_{l \in \mathcal{C}}\sup_{k\geq N+1}\{D_k \}\sum_{j=N+1}^\infty \rho^{-j}  \\&=  \sum_{j=0}^{N} \rho^{-j}  \sum_{l \in \mathcal{C}} D_{n-j} E^l_{n-j} +  d\frac {\rho^{-N}} {\rho - 1} \sup_{k\geq N+1}\{D_k \}
\end{align*}
So $\mathbb{P}$-almost surely
\[
\limsup_{n} \frac {Y^i_n} {\rho^n} \leq W u_i\sum_{j=0}^{N}  \rho^{-j}  \sum_{l \in \mathcal{C}} \mathbb{P}_i(Z^l_j > 0) + d\frac {\rho^{-N}} {\rho - 1} \sup_{k\geq N+1}\{D_k \}
\]
Letting $N \rightarrow \infty$ we get 
\[
\limsup_{n} \frac {Y^i_n} {\rho^n} \leq \zeta_iW \hspace{0.5cm} \mathbb{P}-\text{a.s.}
\]
For the bound from below, we note that
\[
    \frac {Y^i_n} {\rho^n} \geq \sum_{j=0}^{N}\rho^{-j}  \sum_{l \in \mathcal{C}} D_{n-j} E^l_{n-j} 
\]
so taking $\liminf\limits_n$ and then letting $N \rightarrow \infty$ we get
\[
\liminf_{n} \frac {Y^i_n} {\rho^n} \geq \zeta_iW \hspace{0.5cm} \text{a.s.}
\]
which concludes the proof of the lemma.
\end{proof}

Now let 
\[
M_n = \max\{\xi_{v_k} \ : \ v \in \mathbb{T}_n, k\leq n\}
\]Since up to the $n$-th generation, for any $i \in \mathcal{C}$, there are $Y^i_n$ displacements with distribution $F_i$, and all the displacements are independent of each other, we have
\[
\mathbb{P}\left(M_n \leq a_n x\right) = \mathbb{E} \left[\prod_{i \in \mathcal{C}} F_i(a_nx)^{Y^i_n}\right]
\]
Note that 
\begin{align}\label{t1:eq2}
    F_i(a_nx)^{Y^i_n} = \exp{\Big\{\frac {Y^i_n} {\rho^n} \rho^n \log F_i(a_n x)\Big\}} 
\end{align}
 Now, $a_n$ was chosen so that $\rho^n (1- F_I(a_n )) \xrightarrow[ n \rightarrow \infty]{}  1$. Furthermore, $1 - F_i(z) \sim L_i(z) z^{-r_i} $ as $z \rightarrow \infty$, where $L_i$ is slowly varying. Then, using the fact that for $z$ close to $0$, $\log(1+z) \sim z$, we have
\begin{align}\label{t1:eq4}
\rho^n \log F_I(a_n x) \sim -\rho^n (1- F_I(a_n x )) = -\rho^n (1- F_I(a_n) ) \frac { (1- F_I(a_n x )) } {(1- F_I(a_n ))} \xrightarrow[ n \rightarrow \infty]{} -x^{-r}.
\end{align}
Applying Lemma \ref{totalpopulation} and \eqref{t1:eq4} in \eqref{t1:eq2} yields\[F_I(a_nx)^{Y^I_n} \xrightarrow[ n \rightarrow \infty]{a.s.}  \exp \big\{ -\zeta W x^{-r} \big\}.\] Similarly, for $i\neq I$,
\begin{equation}
\begin{aligned}\label{th:eq1}
   \rho^n \log F_i(a_n x) &\sim \rho^n (1- F_i(a_n x )) \\&= \rho^n (1- F_I(a_n x) ) \frac { (1- F_i(a_n x )) } {(1- F_I(a_n x ))} \sim x^{-r} (a_n x)^{r - r_i} \frac {L_i(a_n x)} {L_I(a_n x)}
\end{aligned}
\end{equation}
which converges to 0, because $r<r_i$ and $L_i$'s are slowly varying. Again, by Lemma \ref{totalpopulation} $\rho^{-n} Y^i_n$ has a finite limit, so \eqref{th:eq1} yields $F_i(a_nx)^{Y^i_n} \xrightarrow[ n \rightarrow \infty] {\mathbb{P}-a.s.} 1$ for $i\neq I$. Hence,
\[
\prod_{i \in \mathcal{C}} F_i(a_nx)^{Y^i_n} \xrightarrow[ n \rightarrow \infty]{a.s.}  \exp \big\{ -\zeta W x^{-r} \big\}
\]
and using the dominated convergence theorem, we have shown 
\begin{equation}\label{maxdisp}
    \mathbb{P}\left(M_n \leq a_n x\right) \xrightarrow[ n \rightarrow \infty]{} \mathbb{E}\left[ \exp \big\{ - \zeta W x^{-r} \big\}\right]
\end{equation}
To finish the proof, we need to show that $\mathbb{P}\left(M_n \leq a_n x\right) \sim \mathbb{P}\left(R_n \leq a_n x\right)$ as $n\rightarrow \infty$. Observe that for any $\varepsilon > 0$ 
    \[
    \mathbb{P} \left( R_n > a_n x\right) \leq \mathbb{P}\left(M_n > a_n (x-\varepsilon)\right) + \mathbb{P} \left( R_n > a_n x, M_n \leq a_n(x-\varepsilon)\right)
    \]
    and 
        \[
    \mathbb{P} ( R_n > a_n x) \geq \mathbb{P}(M_n > a_n (x+\varepsilon)) - \mathbb{P} ( R_n \leq a_n x, M_n > a_n(x+\varepsilon)).
    \]
    Hence, it suffices to show 
    \begin{equation}\label{upperbound}
            \mathbb{P} \left( R_n > a_n x, M_n \leq a_n(x-\varepsilon)\right) \xrightarrow[ n \rightarrow \infty]{} 0 
    \end{equation}
    and 

    \begin{equation}\label{lowerbound}
            \mathbb{P} ( R_n \leq a_n x, M_n \geq a_n(x+\varepsilon))\xrightarrow[ n \rightarrow \infty]{} 0 .
    \end{equation}

We start by showing \eqref{upperbound}. First observe
\begin{align*}
    \mathbb{P} ( R_n > a_n x, M_n \leq a_n(x-\varepsilon)) \leq \E \left[Z_n(a_n x, \infty) \boldone_{M_n \leq a_n(x- \varepsilon)}\right] 
\end{align*}
where $Z_n(a_n x, \infty)$ is the number particles in the $n$-th generation, that are positioned above $a_n x$.  We now need to introduce some new notation:
Denote $F_{i,n}$ for $n$-th convolution of $F_i$ (the distribution function of a sum of $n$ independent random variables distributed as $F_i$). Furthermore, for $\vec{n}= (n_1,n_2, \dots, n_d)$, let \[F_{\vec{n}}(x) = F_{1,n_1} \ast F_{2,n_2} \ast \dots \ast F_{d,n_d}(x)\]
 For a distribution function $F$ and $x, y \in \R$, let $F^y(x) = F(x) \wedge F(y)$ be the distribution function $F$ trimmed at $y$. Note that if $S_n$ is a random walk with step distribution $F$, then 
\begin{equation}\label{t1:eq11}
    F_n^y(x) = \mathbb{P} (S_n < x, \sup\limits_{1 \leq k \leq n} S_k - S_{k-1} < y)
\end{equation}
where $F_n^y$ is the $n$-th convolution of $F^y$.
Now, for a particle in the n-th generation, which had $n_i$ ancestors of type $i$, with $n = \sum_{i \in \mathcal{C}} n_i$, the probability of it ending up in $(a_n x, \infty)$, while all the displacements on the path are smaller than $a_n(x- \varepsilon)$, is 
\[
F_{\vec{n}}^{(a_n (x - \varepsilon))}(\infty) - F_{\vec{n}}^{(a_n (x - \varepsilon))} (a_n x).
\]

Let $A_{\vec{n}}$ be the expected number of particles in the $n$-th generation, that had $n_i$ ancestors of type $i$ for each respective $i \in \mathcal{C}$. Then
\begin{equation}
\begin{aligned}\label{th:integral}
    &\E [Z_n(a_n x, \infty) \boldone_{M_n \leq a_n(x- \varepsilon)}] \leq   \sum_{\vec{n}}A_{\vec{n}}\left(F_{\vec{n}}^{(a_n (x - \varepsilon))}(\infty) - F_{\vec{n}}^{(a_n (x - \varepsilon))} (a_n x)\right)
\end{aligned}
\end{equation}
Here we want to apply inequality (1) from step 3 of the proof in \cite{Durrett1983}. It states that for a regularly varying distribution function $F$ with exponent $r$, all $x,\varepsilon, \delta > 0$ and $ s \in (0,r)$, and a constant $C>0$, we have
\begin{align}\label{eq3}
    F^{(a_{n} (x - \varepsilon))}_{n} (\infty) - F^{(a_{n} (x - \varepsilon))}_{n} (a_{n} x )\leq C \left( \frac {n C_s} {a_{n}^s(x - \varepsilon)^s} \right)^{\frac {x(1-\delta)} {(x - \varepsilon)} }
\end{align}
for all $n$.
where $C_s$ is a constant depending only on $s$. This is not immediately applicable in our case, as $F_{\vec{n}}$ is a convolution consisting of a number of different distributions. However, the statement can be easily generalized to our case as long as all distributions satisfy the requirements. To see that this is true, first note that the aforementioned result in \cite{Durrett1983} is based on a more general bound obtained in the proof of Lemma 3 in \cite{Durrett2}. To generalize the bound to the case with mixed distributions, note that the proof relies on the observation that for $h>0$, and $x>y$, 
\begin{align*}
F^{y}_{n} (\infty) - F^{y}_{n} ( x )\leq R(h,y)^n \exp(-hx)
\end{align*}
where
\[
R(h,y) = \int_{-\infty}^y e^{hu}F^y(du).
\]
The conclusion is then the result of the bounds on $R(h,y)$. In the case of mixed distributions, we can obtain a similar inequality. That is, let $R_i(h,y) = \int_{-\infty}^y e^{hu}F_i^y(du)$. Then
\begin{align}\label{th:eq5}
F^{y}_{\vec{n}} (\infty) - F^y_{\vec{n}} ( x ) = \int_{x}^\infty e^{-hu} e^{hu} dF^y_{\vec{n}} ( u ) \leq e^{-hx} \int_{-\infty}^\infty e^{hu} dF^y_{\vec{n}}(u) = e^{-hx} \int_{-\infty}^{ny} e^{hu} dF^y_{\vec{n}}(u).
\end{align}
Now fix any $i\in\mathcal{C}$. Integrating by parts and exchanging integrals, we get
\begin{align}\label{t1:eq5}
\begin{aligned}
   \int_{-\infty}^{ny} e^{hu} d F^y_{\vec{n}}(u) &= e^{hny} \prod_{j=1}^d F_j(y)^{n_j} - \int_{-\infty}^{ny} F^y_{\vec{n}}(u) d e^{hu} \\&= e^{hny} \prod_{j=1}^d F_j(y)^{n_j} - \int_{-\infty}^{ny} \left[\int_{-\infty}^\infty F^y_{i,n_i}(u-z)  dF^y_{\vec{n}/n_i}(z)\right]d e^{hu} \\&=
    e^{hny} \prod_{j=1}^d F_j(y)^{n_j} - \int_{-\infty}^{(n-n_i)y} \left[\int_{-\infty}^{ny} F^y_{i,n_i}(u-z)  d e^{hu}\right]dF^y_{\vec{n}/n_i}(z)
    \end{aligned}
\end{align}
where 
\[
F^y_{\vec{n}/n_i}(z) =  F_{1,n_1}  \ast \dots \ast F_{i-1,n_{i-1}}\ast F_{i+1,n_{i+1}}\ast \dots \ast F_{d,n_d}(x).
\]
Integrating by parts again,
\begin{align}
\begin{aligned}\label{t1:eq6}
&\int_{-\infty}^{(n-n_i)y} \left[\int_{-\infty}^{ny} F^y_{i,n_i}(u-z)  d e^{hu}\right]dF^y_{\vec{n}/n_i}(z) \\&=  \int_{-\infty}^{(n-n_i)y} \left[e^{hny}F^y_{i,n_i}(ny-z)-\int_{-\infty}^{ny}   e^{hu}dF^y_{i,n_i}(u-z)\right]dF^y_{\vec{n}/n_i}(z) \\&=
\int_{-\infty}^{(n-n_i)y} e^{hny}F^y_{i,n_i}(ny-z)dF^y_{\vec{n}/n_i}(z)-\int_{-\infty}^{(n-n_i)y}\int_{-\infty}^{ny}   e^{hu}dF^y_{i,n_i}(u-z)dF^y_{\vec{n}/n_i}(z) \\&= e^{hny}F^y_{i,n_i}(n_iy)F^y_{\vec{n}/n_i}((n-n_i)y)-\int_{-\infty}^{(n-n_i)y}\int_{-\infty}^{ny}   e^{hu}dF^y_{i,n_i}(u-z)dF^y_{\vec{n}/n_i}(z) \\&=
e^{hny} \prod_{j=1}^d F_j(y)^{n_j} - \int_{-\infty}^{(n-n_i)y}e^{hz}\int_{-\infty}^{ny -z}   e^{hw}dF^y_{i,n_i}(w)dF^y_{\vec{n}/n_i}(z) 
\end{aligned}
\end{align}
The last two equalities are justified by the fact that if $x>ny$, then $F^y_{\vec{n}}(x) = \prod_{j=1}^d F_j(y)^{n_j} $ (see \eqref{t1:eq11}). Similarly, the inner integral in the last line only goes up to $n_iy$, as $z\leq (n-n_i)y$ and $F^y_{i,n_i}(w)$ is constant for $w\geq n_iy$. Using an analogous procedure of integrating by parts, expanding the convolution (this time with respect to $F_i^y$ and $F_{i,n_i-1}^y$), interchanging the integrals, and integrating by parts again, we similarly obtain the following equality.
\begin{align*}
    \int_{-\infty}^{n_i y}   e^{hw}dF^y_{i,n_i}(w) &=  \int_{-\infty}^{(n_i-1)y}e^{hz} \int_{-\infty}^y e^{hw} dF_{i}^y(w)dF_{i,n_i-1}^y(z) \\&= \int_{-\infty}^{(n_i-1)y}e^{hz} R_i(h,y)dF_{i,n_i-1}^y(z)
\end{align*}
 Repeating $n_i-1$ times,
\begin{align*}
\int_{-\infty}^{n_i y}   e^{hw}dF^y_{i,n_i}(w) = R_i(h,y)^{n_i}.
\end{align*}
Plugging this into \eqref{t1:eq6}, we obtain 
\begin{align*}
    &\int_{-\infty}^{(n-n_i)y} \left[\int_{-\infty}^{ny} F^y_{i,n_i}(u-z)  d e^{hu}\right]dF^y_{\vec{n}/n_i}(z) \\&=
    e^{hny} \prod_{j=1}^d F_j(y)^{n_j} - \int_{-\infty}^{(n-n_i)y}e^{hz}R_i(h,y)^{n_i}dF^y_{\vec{n}/n_i}(z)
\end{align*}
and together with \eqref{t1:eq5}, this yields
\begin{align*}
\int_{-\infty}^{ny} e^{hu} d F^y_{\vec{n}}(u) =\int_{-\infty}^{(n-n_i)y}e^{hz}R_i(h,y)^{n_i}dF^y_{\vec{n}/n_i}(z).
\end{align*}
Iterating the whole procedure $d-1$ times to cycle through all types and applying the result in \eqref{th:eq5}, we obtain the following result.
\[
F^{y}_{\vec{n}} (\infty) - F^{}_{\vec{n}} ( x )\leq \exp(-hx) \prod_{i=1}^dR_i(h,y)^{n_i} ,\]
Letting $R(h,y) = \max\limits_{i \in \mathcal{C}} R_i(h,y)$, we can write
\begin{align*}
F^{y}_{\vec{n}} (\infty) - F^{}_{\vec{n}} ( x )\leq \exp(-hx) R(h,y)^{n}.
\end{align*}
Applying to $R(h,y)$ the same bounds as in the proof of Lemma 3 in \cite{Durrett2}, and then the truncation argument from Step 3 of the proof in \cite{Durrett1983} to adapt the result to regularly varying distributions (and noting that $r = \min\limits_{i}r_i$, $s<r$ implies $s<r_i$ for all $i$), we see that \eqref{th:eq5} indeed holds for mixed distributions. Hence,
\begin{align}\label{t1:eq8}
\left(F_{\vec{n}}^{(a_n (x - \varepsilon))}(\infty) - F_{\vec{n}}^{(a_n (x - \varepsilon))} (a_n x)\right) \leq C \left( \frac {n C_s} {a_{n}^s(x - \varepsilon)^s} \right)^{\frac {x(1-\delta)} {(x - \varepsilon)} }
\end{align}
Now and choose $\delta$ small enough so that $\theta = {\frac {x(1-\delta)} {(x - \varepsilon)} } >1$, and take $p \in (r,s)$ satisfying $\frac s p \theta > 1$. The for some $C'>0$ (see Remark \ref{t1:remark}),
\[
C \left( \frac {n C_s} {a_n^s(x - \varepsilon)^s} \right)^\theta \leq C'\rho^{- \frac s p \theta n} \left( \frac {n C_s} {(x - \varepsilon)^s} \right)^\theta
\]
Then for $p' \in (1, \frac s p \theta )$ and suitable $C'' >0$, we have for all $n>0$
\[
C' \rho^{- \frac s p \theta n} \left( \frac {n C_s} {(x - \varepsilon)^s} \right)^\theta   \leq C'' \rho^{-np'}
\]
Ultimately we get 
\begin{align*}
    \left(F_{\vec{n}}^{(a_n (x - \varepsilon))}(\infty) - F_{\vec{n}}^{(a_n (x - \varepsilon))} (a_n x) \right)\leq C'' \rho^{-np'}.
\end{align*}
Hence,

\begin{equation*}
\E [Z_n(a_n x, \infty) \boldone_{M_n \leq a_n(x- \varepsilon)}] \leq C'' \rho^{-n  {p'} }\sum_{\vec{n}}  A_{\vec{n}} = C'' \rho^{-n  {p'} }\mathbb{E}[|Z_n|]= C'' \rho^{-n  {p'} } |M_nZ_0|. 
\end{equation*}
Since $\rho^{-n} M^n$ has a finite limit, and $p'>1$, we get \[  C'' \rho^{-n  {p'} } |M_nZ_0| \xrightarrow[n\rightarrow \infty]{} 0. \] Thus, we have proved \eqref{upperbound}.

Denote by $\eta^n$ one of the nth generation particles that descend from a path on which $M_n$ was attained, by $Q_n$ its position, and by $T(i,n)$ the number of its ancestors of type $i$, excluding the particle attaining $M_n$. Note that $\sum_{i=1}^d T(i,n) = n-1$, and let $\vec{T}(n) = \left(T(1,n), T(2,n), \dots, T(d,n)\right)$. For a distribution function $F$ and $y \in \R$, let $\bar{F}^y(x) = \frac{F^y(x)} {F(y)}$ and denote $\bar{F}^y_n$ the n-th convolution of $\bar{F}^y$. Note that if $S_n$ is a random walk with step distribution $F$, then \[\bar{F}^y_n(x) = \frac{F^y_n(x)} {F(y)^n} = \mathbb{P} (S_n < x|\sup\limits_{1 \leq k \leq n} S_k - S_{k-1} < y).\]

With this notation $Q_n - M_n$ has the following distribution function.
\[ \mathbb{P}(Q_n - M_n \leq x) = \E \left[   \bar{F}^{M_n}_{\vec{T}(n)} (x)  \right], \]
That is, it is distributed as a sum of $T(i,n)$ steps from the distributions $F_i$, respectively, conditioned on the fact that they are all smaller from $M_n$. Now, 
\begin{align*}
\mathbb{P} ( R_n \leq a_n x, M_n &\geq a_n(x+\varepsilon)) \leq \mathbb{P} ( Q_n \leq a_n x, M_n \geq a_n(x+\varepsilon)) \\&= \mathbb{P} ( Q_n - M_n \leq a_n x - M_n, M_n \geq a_n(x+\varepsilon))  \\&\leq \mathbb{P} ( Q_n - M_n \leq a_n x - a_n(x+\varepsilon), M_n \geq a_n(x+\varepsilon)) \\& \leq \mathbb{P} ( Q_n - M_n \leq -a_n \varepsilon) \\&=\E \left[   \bar{F}^{M_n}_{\vec{T}(n)}  (-a_n \varepsilon)  \right] \\&=
\E \left[   \bar{F}^{M_n}_{\vec{T}(n)}  (-a_n \varepsilon) \boldone_{\{ M_n \leq 0 \}} \right] +\E \left[   \bar{F}^{M_n}_{\vec{T}(n)}  (-a_n \varepsilon) \boldone_{\{ M_n > 0 \}} \right] 
\\&\leq \mathbb{P} (M_n \leq 0) + \E \left[  \bar{F}^{0}_{\vec{T}(n)}  (-a_n \varepsilon)  \right].
\end{align*}
Since $ \mathbb{P} (M_n \leq 0) \rightarrow 0$, we only need to take care of the second term. To do this, write
\begin{align}\label{th:eq4}
\E \left[  \bar{F}^{0}_{\vec{T}(n)}  (-a_n \varepsilon)  \right] = \E \left[ \int_{-\infty}^{\infty} \bar{F}^0_{\vec{T}(n)/T(1,n)} (-a_n \varepsilon - y) \bar{F}^0_{T(1,n)}(dy)\right].
\end{align}
Now choose $0 < \delta_1 < \varepsilon$ and split the integral at the point $-a_n \delta_1$. Then
\begin{align}\label{th:int3}
\begin{aligned}
    \E \left[ \int_{-\infty}^{-a_n \delta_1} \bar{F}^0_{\vec{T}(n)/T(1,n)} (-a_n \varepsilon - y) \bar{F}^0_{T(1,n)}(dy)\right] &\leq \E \left[\bar{F}^0_{T(1,n)}(-a_n \delta_1)\right] \\&\leq \E \left[\bar{F}^0_{n}(-a_n \delta_1)\right]
\end{aligned}
\end{align}
and 
\begin{align}\label{th:int4}
    &\E \left[ \int^{\infty}_{-a_n \delta_1} \bar{F}^0_{\vec{T}(n)/T(1,n)} (-a_n \varepsilon - y) \bar{F}^0_{T(1,n)}(dy)\right] \leq \E \left[\bar{F}^0_{\vec{T}(n)/T(1,n)}(-a_n \delta_1)\right] .
\end{align}
Note that the last expression in \eqref{th:int4} is of the same form as the term we started with in \eqref{th:eq4}, except we exchanged $\varepsilon$ for $\delta_1$ and eliminated type $1$. Therefore, applying \eqref{th:int3} and \eqref{th:int4} $d$ times with $ \delta_{d} < \dots < \delta_1  < \varepsilon$, we get
\[
\E \big[  \bar{F}^{0}_{\vec{T}(n)}  (-a_n \varepsilon)  \big] \leq \sum_{i \in \mathcal{C}} \E \left[\bar{F}^0_{n}(-a_n \delta_i)\right]
\]
To see that $\bar{F}^0_{n}(-a_{n} \delta) \rightarrow 0$ as $n\rightarrow \infty$, we refer to Step 4 of the proof in \cite{Durrett1983}. We note that the arguments provided there are based only on the condition $\log(x)F(-x) \rightarrow 0$ as $x\rightarrow \infty$, and the fact that $a_n$ grows exponentially fast, so they are also applicable here. This holds for all $i \in \mathcal{C}$, so by the bounded convergence theorem, the whole expression converges to 0.

This concludes the proof of \eqref{lowerbound}, and thus of the theorem. 
\end{proof}
\subsection{Displacements with semi-exponential tails}

In this section we assume that the displacements are independent and admit semi-exponential tails:\begin{equation}\label{semiexp}
    \mathbb{P}\left(\xi^j \geq t\right) = a_j(t) \exp\{-L_j(t)t^{r_j}\},
\end{equation} where $L_j, a_j$ are slowly varying functions such that $\frac {L_j(t)} {t^{1-r_j}}$ are eventually nonincreasing, and $r_j \in (0,1)$. We also assume that they have finite moments. These assumptions are analogous to the one-type model studied by Gantert in \cite{Gantert2000}. In this section, we show an analogous limit theorem for irreducible multi-type branching random walk. 

Our result is as follows.
\begin{theorem}\label{t2}
Let $r = \min\{r_i : i \in \mathcal{C}\}$, $L(t) = \min \{ L_i(t) : r = r_i\}$, and choose $\psi(n)$ to be a positive function satisfying
\begin{equation}\label{t2:norm}
\frac {L(\psi(n))\psi(n)^{r}} n \rightarrow 1.
\end{equation}
Then
\[
\frac {R_n} {\psi(n)} \xrightarrow{a.s.}  (\log \rho)^{\frac 1 {r}}.
\]
\end{theorem}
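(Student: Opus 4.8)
The plan is to adapt Gantert's one-type argument to the multi-type setting, exploiting the fact that in the irreducible case the total number of particles of each type that contribute displacements up to generation $n$ grows like $\rho^n$ (Lemma \ref{totalpopulation}), so morally we have $\asymp \rho^n$ independent copies of each $\xi^i$ on a given path of length $n$.

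For the \emph{upper bound} $\limsup_n R_n/\psi(n) \le (\log\rho)^{1/r}$ a.s., I would first bound $R_n$ above by $M_n$ plus the positive part of the partial sums along paths; but since the displacements have finite moments, the "bulk" of any path of length $n$ contributes $O(n)$, which is $o(\psi(n))$ because $\psi(n)$ grows like $(n/L(\psi(n)))^{1/r}$, i.e. faster than $n$ (as $r<1$). So it suffices to control $M_n = \max\{\xi_{v_k} : v\in\mathbb{T}_n, k\le n\}$, the largest single displacement up to time $n$. Using the representation $\mathbb{P}(M_n \le t) = \E\big[\prod_{i\in\mathcal{C}} F_i(t)^{Y^i_n}\big]$ from the previous proof together with $Y^i_n/\rho^n \to \zeta_i W$, a union bound gives $\mathbb{P}(M_n > \psi(n) y) \le \sum_i \E[Y^i_n] \,\mathbb{P}(\xi^i > \psi(n) y) \le C\rho^n \max_i \exp\{-L_i(\psi(n) y) (\psi(n) y)^{r_i}\}$. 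For $y > (\log\rho)^{1/r}$ the dominant term comes from the index realizing $r = r_i$ with the smallest $L_i$, and since $L_i$ is slowly varying and $r_i \ge r$, one checks $\rho^n\exp\{-L_i(\psi(n)y)(\psi(n)y)^{r_i}\} \le \rho^n \exp\{-(1+o(1)) y^r L(\psi(n))\psi(n)^r\} = \exp\{n(\log\rho - (1+o(1))y^r)\}$, which is summable in $n$ once $y^r > \log\rho$. Borel–Cantelli then yields the a.s.\ upper bound.

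For the \emph{lower bound} $\liminf_n R_n/\psi(n) \ge (\log\rho)^{1/r}$ a.s., I would fix $y < (\log\rho)^{1/r}$ and a type $i^\star$ achieving $r = r_{i^\star}$ with $L_{i^\star} = L$, and show that with probability tending to one there is a path in generation $n$ containing a single displacement of size at least $\psi(n) y$ (the rest of the path, having $O(n) = o(\psi(n))$ fluctuation, then keeps the particle above $\psi(n) y'$ for any $y' < y$). The cleanest route is a second-moment / many-to-one argument applied to $Z_n(\psi(n) y, \infty)$, or equivalently: condition on the Galton–Watson tree, note that along a skeleton there are $\asymp Y^{i^\star}_n \asymp \rho^n W$ independent chances to produce a displacement $\ge \psi(n) y$, each with probability $\asymp \exp\{-y^r L(\psi(n))\psi(n)^r\} = \rho^{-(1+o(1))y^r n / \log\rho \cdot \log\rho}$… more precisely the expected number of such large displacements is $\asymp \rho^n \exp\{-(1+o(1)) y^r L(\psi(n))\psi(n)^r\} = \exp\{n(\log\rho - (1+o(1))y^r)\} \to \infty$ since $y^r < \log\rho$. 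To convert this divergent expectation into an a.s.\ statement I would work along a deterministic subsequence $n_k$ (e.g. $n_k = k$) after restricting to the survival event and conditioning on $W > 0$, use a conditional Paley–Zygmund / Borel–Cantelli-type bound (the large-displacement events across distinct particles are independent given the tree), and finally fill the gaps $n \in (n_k, n_{k+1})$ by monotonicity-type arguments, using that $\psi$ is regularly varying of index $1/r$ so $\psi(n_{k+1})/\psi(n_k) \to 1$.

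The main obstacle is the lower bound, specifically turning the "expected number of big jumps $\to \infty$" into an \emph{almost sure} existence of a surviving path through a big jump, uniformly enough to pass to the full sequence $n$. Two technical points need care: first, the large displacement must sit on an edge belonging to a particle that \emph{has descendants in generation} $n$ (so that it actually affects $R_n$), which is exactly why $Y^{i^\star}_n$ rather than $Z^{i^\star}_n$ appears and why Lemma \ref{totalpopulation} is invoked; second, one must ensure the remaining $n - 1$ displacements on that path do not drag the particle down by more than $o(\psi(n))$ — this follows from the finite-moment assumption via a maximal inequality / law of large numbers for the random walk $S$ along the path, giving fluctuation $O(n) = o(\psi(n))$, but it has to be made uniform over the (growing) number of candidate paths, which is handled by a crude union bound since the failure probability for each path is exponentially small in $n$ while the number of paths is only polynomially-in-$\rho^n$ large — more precisely $\rho^n$ times an exponentially small term, so one needs the moment bound to beat $\rho^n$, which a Cramér-type estimate on $\mathbb{P}(S_n/n < -\eta)$ does.
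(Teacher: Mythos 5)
Your proposal captures the right heuristic --- one-big-jump, Borel--Cantelli over $n$, union bound over the $\sim\rho^n$ particles --- and correctly identifies both the role of Lemma \ref{totalpopulation} and the obstacle in upgrading the lower bound to an almost-sure statement. However, there are genuine gaps in both halves, and the technical machinery the paper actually deploys is different in ways that matter.

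\textbf{Upper bound.} The decomposition $R_n \le M_n + \text{bulk}$ with ``the bulk is $O(n)$'' does not survive the union bound over $\rho^n$ paths. The bulk on a given path is still a sum of semi-exponentially tailed steps, so $\mathbb{P}(\text{bulk} > \delta\psi(n)) \approx \exp\{-\delta^r n(1+o(1))\}$, which is \emph{not} $o(\rho^{-n})$ when $\delta$ is small; in particular there may be paths carrying a second jump of order $\psi(n)$, so one cannot conclude $\max_v(\text{bulk}_v) = o(\psi(n))$ a.s.\ and hence cannot push $\limsup R_n/\psi(n)$ down to $(\log\rho)^{1/r}$. What is actually needed is the precise large-deviation rate $\tfrac1n\log\mathbb{P}(S_n\ge\psi(n)x)\to -x^r$, which is the rigorous form of the one-big-jump principle. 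The paper obtains it by stochastically dominating any path sum by an i.i.d.\ sum $\sum\eta_k^c$ with the heaviest admissible tail and invoking Gantert's Theorem 3, which yields $\mathbb{P}(S_v \ge \psi(n)(\log\rho+\varepsilon)^{1/r}) \le \rho^{-n}e^{-n\delta}$ uniformly in $v\in\mathbb{T}_n$ --- exactly the rate needed for the union bound and Borel--Cantelli.

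\textbf{Lower bound.} The Cram\'er-type estimate on $\mathbb{P}(S_n/n<-\eta)$ requires a finite exponential moment on the negative part, but assumption \eqref{semiexp} only prescribes the \emph{right} tail, and the left tail is merely assumed to have finite moments. The paper sidesteps this entirely by truncating from below: working in $\mathbb{T}^K = \{v : \xi_{v_k}\ge -K \ \forall k\le|v|\}$ with eigenvalue $\rho_K\to\rho$, the downward drift is deterministically $\ge -(n-1)K = o(\psi(n))$, with no moment estimate needed. Moreover, your Paley--Zygmund route gives convergence in probability at best; you flag the a.s.\ upgrade as the main obstacle but do not resolve it. The paper never computes a second moment. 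Instead it uses the conditional product representation $\mathbb{P}(M_n^K< b_n)=\mathbb{E}\bigl[\prod_i F_i(b_n)^{Z_n^{K,i}}\bigr]\le \mathbb{E}\bigl[\exp\{-Z_n^{K,I(n)}\mathbb{P}(\xi^{I(n)}\ge b_n)\}\bigr]$, which reduces the a.s.\ lower bound to summability of $\mathbb{P}(Z_n^{K,I}\le(1-\varepsilon)^n\rho_K^n)$. Establishing that exponential decay is the genuinely new multi-type ingredient here, and it is what Lemmas \ref{lemma} and \ref{lemma2} provide --- combining Athreya--Vidyashankar's large-deviation rate for $Z_n/\rho^n$ with Jones's small-value estimate for $W$, together with a truncation and type-coupling argument. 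None of this appears in your proposal, and without it the ``big jumps $\to\infty$ in expectation'' step cannot be converted into an almost-sure statement.

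Finally, as a smaller point, a second-moment computation in the multi-type setting would in any case require controlling pairs of particles through their most recent common ancestor across types, and the conditional product form used by the paper is both simpler and sharper (it yields exponential decay of the failure probability, not just a lower bound on $\mathbb{P}(N_n>0)$).
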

\begin{remark}\label{t2:remark}
    \emph{As in the one type case, the existence of $\psi(n)$ satisfying \eqref{t2:norm} is guaranteed by the result of de Bruijn \cite{DeBruijn}. Indeed, if $K(x)$ is the de Bruijn conjugate of $x \mapsto L\left(x^{\frac 1 r}\right)$, then we can take $\psi(n)=K(n)^{\frac 1 r}n^{\frac 1 r}$. In particular, this implies that for any $\varepsilon > 0$, \[
    n^{\frac 1 r (1-\varepsilon)} \leq \psi(n) \leq n^{\frac 1 r (1+\varepsilon)}
    \]
    for large enough $n$.
    }
\end{remark}
    
    We will also show the following lemmas, which describe the asymptotic behavior of the underlying multi-type Galton-Watson process.
         \begin{lemma}\label{lemma}
         Let $|Z_n| = \sum_{r \in \mathcal{C}} Z^r_n$ be the sum of all particles in the $n$-th generation of the process. Then for any $\varepsilon > 0$ there is $ 0< \delta \leq \varepsilon$ satisfying
         \[
         \mathbb{P} \left(\frac{|Z_n|} {\rho^n}  <  {(1-\varepsilon)^n} \right) < (1-\delta)^{ n}
         \]
         for all $n$ large enough. 
     \end{lemma}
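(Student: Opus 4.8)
\textbf{Proof proposal for Lemma \ref{lemma}.}

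The plan is to reduce the statement to a one-dimensional problem about a single-type Galton–Watson process and then invoke a large-deviation lower bound for the Kesten–Stigum martingale. First I would pass from the vector $Z_n$ to a scalar quantity by using the right eigenvector $v$: since all coordinates of $v$ are positive (by Perron–Frobenius under \eqref{as3}), there is a constant $\kappa>0$ with $|Z_n| \geq \kappa\, (Z_n\cdot v)$, so it suffices to prove the analogous statement for $W_n := \rho^{-n}(Z_n\cdot v)$, which is a nonnegative martingale converging a.s. and in $L^1$ to $W$ by \eqref{limitscalar} and the Kesten–Stigum assumption \eqref{as2}. Concretely, it is enough to show that $\mathbb{P}(W_n < (1-\varepsilon)^n)$ decays geometrically, i.e. is at most $(1-\delta)^n$ for some $\delta=\delta(\varepsilon)>0$ and all large $n$.

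The key step is a supermultiplicativity / branching argument. Fix a block length $k$ and write $n = qk + s$ with $0\le s<k$. Conditionally on generation $k$, the martingale at time $n$ decomposes as $W_n = \rho^{-k}\sum_{i} \sum_{j=1}^{Z_k^i} W^{(i,j)}_{n-k}$, where the $W^{(i,j)}_{n-k}$ are independent copies (started from a type-$i$ particle) of the corresponding martingale at time $n-k$. Iterating this down to depth $qk$, the event $\{W_n < (1-\varepsilon)^n\}$ forces, at some level, that a large fraction of the $\asymp \rho^{qk}$ subtrees rooted there have their martingale below a level like $(1-\varepsilon/2)^k$ times its typical value. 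Since the events for distinct subtrees are independent, one gets a bound of the form $\mathbb{P}(W_n < (1-\varepsilon)^n) \le (\text{number of subtrees})\cdot p_k^{(\text{fraction})\cdot(\#\text{subtrees})}$ where $p_k := \max_i \mathbb{P}_i(W_k < (1-\varepsilon/2)^k v_i/2)$; because $W_k \to W$ with $\mathbb{P}_i(W>0)>0$ (we assumed $\mathbb{P}(Z_n\to 0)=0$), one has $p_k \to \mathbb{P}_i(W=0)<1$, so $p_k$ is bounded away from $1$ uniformly in $k$. Choosing $k$ large and then optimizing the exponent yields geometric decay with an explicit $\delta$.

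An alternative, possibly cleaner route avoids blocks: use the Laplace transform. Let $\phi_n^i(\lambda) = \mathbb{E}_i[e^{-\lambda W_n}]$. From the branching recursion, $\phi_{n+1}^i(\lambda) = f_i\big((\phi_n^j(\lambda/\rho))_{j}\big)$ where $f_i$ is the offspring generating function of type $i$; since $\mathbb{E}_i[W_n] = v_i$ is bounded and $W_n\to W$ with $\mathbb{E}_i[e^{-\lambda W}]<1$ for $\lambda>0$, a Chernoff bound $\mathbb{P}(W_n < t) \le e^{\lambda t}\phi_n(\lambda)$ with $t=(1-\varepsilon)^n$ and $\lambda = \lambda(n)$ growing suitably (e.g. $\lambda(n)=(1/(1-\varepsilon/2))^n$) gives the required $(1-\delta)^n$ bound provided one controls how fast $\phi_n(\lambda_n)$ can grow; one checks inductively that $\phi_n^i(\lambda)$ stays $\le$ some $c<1$ once $\lambda$ exceeds a threshold, using that $f_i$ maps $[0,c]^d$ strictly into itself near the fixed point when $\rho>1$. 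I would present whichever of the two is shorter; the block argument is more robust and I expect to use that.

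\textbf{Main obstacle.} The delicate point is uniformity: I need the geometric rate $\delta$ to be genuinely independent of $n$, which requires a bound on $p_k$ (or on $\phi_k(\lambda)$) that does not deteriorate as $k\to\infty$. This in turn rests on the fact that $\inf_k \mathbb{P}_i(W_k \ge \eta) > 0$ for a fixed $\eta>0$, equivalently that the limit $W$ is not a.s. arbitrarily small — guaranteed by $\mathbb{E}_i[W]=v_i>0$ together with $L^1$-convergence — and on quantifying the ``fraction of bad subtrees'' combinatorics so that the $(\#\text{subtrees})$ prefactor (which is only $\rho^{qk}$, i.e. exponential in $n$) is beaten by the doubly-exponential smallness $p_k^{c\rho^{qk}}$. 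Handling the leftover remainder $s$ and the case where $Z_k$ itself is atypically small (which is itself a rare event of geometrically small probability, handled by the same recursion one level up) are the routine technical wrinkles.
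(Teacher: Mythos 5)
Your route is genuinely different from the paper's. The paper does not argue from scratch: it truncates the offspring variables at a level $L$ so that the exponential moment condition \eqref{athreyaexp} holds for the trimmed process (chosen with $\rho(L)>(1-\varepsilon)\rho$), splits the event according to whether the martingale $\rho(L)^{-n}Z_n(L)\cdot v(L)$ deviates from its limit $W(L)$ or $W(L)$ itself is exponentially small, and then quotes two external results: the Athreya--Vidyashankar large-deviation bound (Lemma \ref{athreyaeq}, \cite{Athreya1995}) for the deviation term and Jones's small-value bound \eqref{joneseq} (\cite{jones}) for $\mathbb{P}(W(L)\le\cdot)$. Your proposal replaces both citations by a self-contained branching argument. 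Its sound parts: the reduction to $W_n=\rho^{-n}Z_n\cdot v$, and the uniform positivity $\inf_m\min_i\mathbb{P}_i(W_m\ge\eta)>0$ for a suitable fixed $\eta>0$ (using $W_m>0$ a.s.\ under no extinction and $W>0$ a.s.). Moreover the block step is cleanest in the following form: take the split level $\ell=\lfloor\beta n\rfloor$ with $\beta\log\rho<\log\tfrac1{1-\varepsilon}$; since the subtree contributions are nonnegative, on $\{W_n<(1-\varepsilon)^n\}$ \emph{every} one of the $|Z_\ell|$ subtree martingales is below $\rho^{\ell}(1-\varepsilon)^n\le\eta$ for large $n$, so conditional independence gives $\mathbb{P}(W_n<(1-\varepsilon)^n)\le\mathbb{E}\big[(1-c)^{|Z_\ell|}\big]$, with no ``large fraction of bad subtrees'' combinatorics at all.

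There is, however, a genuine gap at precisely the point you call a routine wrinkle. To conclude you must bound $\mathbb{E}\big[(1-c)^{|Z_\ell|}\big]\le(1-c)^{c'n}+\mathbb{P}(|Z_\ell|\le c'n)$, and the geometric smallness of $\mathbb{P}(|Z_\ell|\le c'n)$ is itself a lower-deviation estimate for the population; it is not produced ``by the same recursion one level up'' without a separate argument, and it is exactly the quantitative input the paper imports from Jones. It can be supplied --- e.g.\ via the reduced tree of particles with infinite lines of descent, or by a generating-function argument using extinction probability $0$, irreducibility \eqref{as3} and $\rho>1$ to show that the linearization of the offspring generating function at $0$ has spectral radius strictly less than $1$ --- but this is the crux of the lemma and has to be written out. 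The same issue makes your Laplace-transform variant insufficient as stated: proving $\phi_n^i(\lambda)\le c<1$ beyond a threshold in $\lambda$ only gives a constant bound, whereas the Chernoff step with $t=(1-\varepsilon)^n$ and $\lambda_n=(1-\varepsilon/2)^{-n}$ requires $\phi_n^i(\lambda_n)\le(1-\delta)^n$; the repair is to iterate the offspring generating function a number of times linear in $n$ starting from the constant vector $(c,\dots,c)$ and prove geometric contraction towards its fixed point $0$, which is again the small-value estimate in disguise. So: different and potentially more self-contained than the paper's proof, but incomplete where the paper leans on \cite{Athreya1995} and \cite{jones}.
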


     \begin{lemma}\label{lemma2}
     There exist $\delta >0$ and $\beta \in (0,1)$, such that for all $i \in \mathcal{C}$ and all $n$ large enough 
     \[
     \mathbb{P}(Z_n^i < \delta |Z_{n-l}|) \leq \beta^n,
     \]where $l \in \mathbb{N}$ is as in \eqref{as3}.
     \end{lemma}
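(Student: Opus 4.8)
### Proof proposal for Lemma \ref{lemma2}

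The plan is to use Lemma \ref{lemma} as a black box controlling $|Z_{n-l}|$ and then condition on the configuration at generation $n-l$, using the irreducibility assumption \eqref{as3} to show that each of the (many) particles alive at time $n-l$ independently contributes at least one descendant of type $i$ at time $n$ with probability bounded below. First I would fix $i \in \mathcal{C}$ and note that by \eqref{as3}, for any type $c \in \mathcal{C}$ there is a positive probability $p_c > 0$ that a single particle of type $c$ has at least one descendant of type $i$ exactly $l$ generations later; set $p = \min_{c \in \mathcal{C}} p_c > 0$. Conditionally on $\mathcal{F}_{n-l}$ (the sigma-algebra generated by the process up to generation $n-l$), the events ``particle $v$ at generation $n-l$ has a type-$i$ descendant at generation $n$'' are independent across the $|Z_{n-l}|$ particles $v$, each with probability at least $p$. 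Hence, conditionally on $|Z_{n-l}| = N$, the number of ``successful'' particles stochastically dominates a $\mathrm{Binomial}(N,p)$ random variable, and so $Z_n^i \geq \mathrm{Binomial}(N,p)$ in the same sense.

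Next I would turn the binomial lower bound into the claimed exponential bound. On the event $\{|Z_{n-l}| = N\}$, a standard Chernoff bound gives $\mathbb{P}(\mathrm{Binomial}(N,p) < \tfrac{p}{2} N) \leq e^{-cN}$ for a constant $c = c(p) > 0$. Therefore, choosing $\delta = p/2$,
\[
\mathbb{P}(Z_n^i < \delta |Z_{n-l}|) \leq \mathbb{E}\!\left[ e^{-c|Z_{n-l}|} \right].
\]
Now I would split the expectation according to whether $|Z_{n-l}|$ is large or small. By Lemma \ref{lemma} (applied at index $n-l$, with any fixed $\varepsilon \in (0,1)$), for $n$ large enough
\[
\mathbb{P}\!\left( |Z_{n-l}| < (1-\varepsilon)^{n-l}\rho^{n-l} \right) < (1-\delta_0)^{n-l}
\]
for some $\delta_0 > 0$. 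On the complementary event $|Z_{n-l}|$ is at least $((1-\varepsilon)\rho)^{n-l}$, which, since $(1-\varepsilon)\rho > 1$ for $\varepsilon$ small (recall $\rho > 1$ by \eqref{as1}), grows geometrically; hence $e^{-c|Z_{n-l}|}$ is bounded by $e^{-c((1-\varepsilon)\rho)^{n-l}}$, which decays doubly-exponentially and is in particular eventually smaller than any $\beta_1^n$. Combining the two contributions,
\[
\mathbb{E}\!\left[ e^{-c|Z_{n-l}|} \right] \leq (1-\delta_0)^{n-l} + e^{-c((1-\varepsilon)\rho)^{n-l}} \leq \beta^n
\]
for a suitable $\beta \in (0,1)$ and all $n$ large enough, and since $d = |\mathcal{C}|$ is finite one can pick $\delta$ and $\beta$ uniformly over all $i \in \mathcal{C}$ by taking the worst case.

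The main obstacle I anticipate is the conditional independence step: one must be careful that ``having a type-$i$ descendant at generation $n$'' is genuinely a function of the subtree rooted at $v$, so that these events are conditionally independent given $\mathcal{F}_{n-l}$, and that the $p_c$ are bounded away from zero uniformly — which is exactly what \eqref{as3} delivers, since $M^l(c,i) > 0$ forces a strictly positive probability of a type-$i$ particle $l$ generations below a type-$c$ particle, and there are only finitely many types. A secondary technical point is making sure Lemma \ref{lemma} can be invoked at index $n-l$ rather than $n$; since $l$ is a fixed constant this only shifts ``$n$ large enough'' and changes the base of the exponential by a bounded factor, so it is harmless. Everything else is routine: a Chernoff estimate and the observation that a doubly-exponentially small term is negligible next to a geometrically small one.
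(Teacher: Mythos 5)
Your proposal is correct and follows essentially the same approach as the paper: condition on $\mathcal{F}_{n-l}$, use irreducibility (via $M^l>0$) to give each generation-$(n-l)$ particle a uniformly positive conditional probability of producing a type-$i$ descendant $l$ steps later, reduce to a binomial-type lower bound on $Z_n^i$, and then invoke Lemma \ref{lemma} to rule out $|Z_{n-l}|$ being too small. The only difference is that you quote a Chernoff bound where the paper carries out the equivalent tail estimate by hand via a Vandermonde-identity computation, which is a matter of presentation rather than a different route.
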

    
     \begin{proof}[Proof of the lemma \ref{lemma}.]
         First note that if $\rho(1-\varepsilon) \leq1$, then $\mathbb{P} \left(\frac{|Z_n|} {\rho^n}  <  {(1-\varepsilon)^n} \right) = 0$ and the statement is trivial. Assume $\rho(1-\varepsilon) >1$. The key tool to proving the lemma is the result of Athreya and Vidyashankar (\cite{Athreya1995}, Theorem 2.6), which we state below.
         \begin{lemma}\label{athreyaeq}
         Additionally to our standing assumptions, assume
\begin{equation}\label{athreyaexp}
    \text{there exists } \theta_0 > 0 \text{, such that }\mathbb{E}_i \left[\exp \{\theta_0 Z_1^j \}\right] < \infty \text{ for all } i,j \in \mathcal{C}.
\end{equation}
Then there are constants $C>0$, $\lambda > 0$ such that for any $\varepsilon > 0$
         \begin{equation}
             \mathbb{P} \left( \Big|\frac{Z_n} {\rho^n} \cdot v - W \Big| \geq  \varepsilon \right) < C \exp\{-\lambda \left(\varepsilon^2 \rho^n \right)^{\frac 1 3}\}
         \end{equation}
         for all $n$.
         \end{lemma}
 Although its stated in \cite{Athreya1995} for two types, it is clear from the proof that the same argument holds for an arbitrary number of types. 
 Another important inequality we will use is a straightforward consequence of results from Jones \cite{jones} describing the small-value probabilities of $W$: there exists $\alpha >0$, such that for small enough $\varepsilon >0$,
\begin{equation}\label{joneseq}\mathbb{P}(W \leq \varepsilon)\leq \varepsilon^\alpha.
\end{equation}
Since we do not assume the existence of exponential moments $\eqref{athreyaexp}$, some additional steps are required to use Lemma $\ref{athreyaeq}$. Consider a trimmed Galton-Watson process $Z_n(L)$ generated by random variables $N_{i,j}{(L)} = N_{i,j} \boldone_{\{ N_{i,j} < L\}} $ for some $L>0$ and denote by $M{(L)}$ its mean matrix, by $\rho{(L)}$ its largest eigenvalue, and by $v(L)$ its left eigenvector. Since $\rho{(L)} \rightarrow \rho$ as $L \rightarrow \infty$, we choose $L$ large enough so that $\rho{(L)} > (1-\varepsilon)\rho$. Clearly, 
         \[
         \mathbb{P} \left(\frac{|Z_n|  }{\rho^n}  <  {(1-\varepsilon)^n} \right) \leq \mathbb{P} \left(\frac{|Z_n(L)|} {\rho^n} <  {(1-\varepsilon)^n} \right).
         \]
         Now choose $\delta >0$ satisfying
         \[
         (1 - \delta)\rho{(L)} \geq (1 - \varepsilon) \rho, \hspace{3mm} (1-\delta)^2\rho(L) > 1
         \]
         so that 
         \[ \mathbb{P} \left(\frac{|Z_n(L)|} {\rho^n} <  {(1-\varepsilon)^n} \right) \leq  \mathbb{P} \left(\frac{|Z_n(L)|} {\rho(L)^n} <  {(1-\delta)^n} \right)\]
         Since $Z_n(L)$ satisfies the same assumptions we make on $Z_n$ in this chapter, then \eqref{limit} holds for some $W(L)$ and \eqref{joneseq} holds for appropriate choice of constants. Note that $|Z_n(L)| \geq \frac 1 {||v||_\infty}Z_n(L)\cdot v(L)$, and observe 
        \begin{align*}
        &\mathbb{P} \left(\frac{|Z_n(L)|} {\rho(L)^n}  <  {(1-\delta)^n} \right) \leq \mathbb{P} \left(\frac{Z_n(L) \cdot v(L)} {\rho(L)^n}  <  ||v(L)||_\infty  {(1-\delta)^n} \right)  \\ &= \mathbb{P} \left(\frac{Z_n(L) \cdot v(L)} {\rho(L)^n}  <  ||v(L)||_\infty  {(1-\delta)^n}, \hspace{2mm} W(L)  \geq  \frac {3 ||v(L)||_\infty } 2{(1- \delta )^n}\right) \\ &+ \mathbb{P} \left(\frac{Z_n(L) \cdot v(L)} {\rho(L)^n}  <  ||v(L)||_\infty  {(1-\delta)^n}, \hspace{2mm} W(L)  <  \frac {3 ||v(L)||_\infty } 2{(1- \delta )^n}\right)
        \\&\leq \mathbb{P} \left( \Bigg|\frac{Z_n(L)\cdot v(L)} {\rho(L)^n} - W(L) \Bigg| \geq  \frac {||v(L)||_\infty } 2{(1- \delta )^n}\right)
        \\&+ \mathbb{P} \left(  W(L)  <  \frac {3 ||v(L)||_\infty } 2{(1- \delta )^n} \right)
        \end{align*}
        By \eqref{joneseq}, the second term is bounded by $\left(\frac {3  ||v(L)||_\infty } {2}\right)^{\alpha} (1- \delta )^{\alpha n} $ for $\alpha >0$ and $n$ large enough.
        The bound for the first term follows from Lemma \ref{athreyaeq}. Since $Z_n(L)$ satisfies \eqref{athreyaexp} as well as our standing assumptions on $Z_n$, we conclude from \ref{athreyaeq}, that for appropriate $C > 0$, $\lambda > 0$
        \[
        \mathbb{P} \left( \Bigg|\frac{Z_n(L) \cdot v(L)} {\rho(L)^n} - W(L) \Bigg| \geq  \frac {||v(L)||_\infty} 2{(1- \delta )^n}\right) \leq C \exp\left\{-\lambda \left((1-\delta)^2 \rho(L) \right)^{\frac n 3} \right\},
        \]
        and for large enough $n$, 
        \[
        C \exp\left\{-\lambda \left((1-\delta)^2 \rho(L) \right)^{\frac n 3} \right\} \leq (1-\delta)^{\alpha n}.
        \]
        Since $ \left(1 + \left(\frac {3  ||v(L)||_\infty } {2}\right)^{\alpha} \right)(1-\delta)^{\alpha n}  < (1-\delta_0)^n$ for some $\delta_0 < \delta$ and large enough $n$, the lemma is proven.
        \end{proof}

        \begin{proof}[Proof of the Lemma \ref{lemma2}]
        Fix $i \in \mathcal{C}$ and denote by $Z_l^{r \rightarrow i}$ a generic random variable distributed as $Z_l^i$ under $\mathbb{P}_r$. Recall that $l$ is a natural number for which $M^l$ has only strictly positive entries (see  assumption \eqref{as3}). Consequently, $q_{r,i} = \mathbb{P} \left( Z_l^{r \rightarrow i} = 0 \right) < 1$ for all $r \in \mathcal{C}$. Let $q_{\max} = \max \{ q_{r,i}: r \in \mathcal{C}\}$ and $q_{\min} = \min \{ q_{r,i}: r \in \mathcal{C}\}$. Then
        \begin{align*}
            \mathbb{P}\left(Z_n^i < \delta |Z_{n-l}|\right) = \mathbb{P}\left(\sum_{r \in \mathcal{C}} \sum_{m=1}^{Z_{n-l}^r} Z_l^{r \rightarrow i}(m)   < \delta |Z_{n-l}|\right) = \mathbb{E} \left[ \Phi(Z_{n-l})\right]
        \end{align*}
        where for fixed $r$, $\{Z_l^{r \rightarrow i}(m)\}_{m \geq 1} $ are independent copies of $Z_l^{r \rightarrow i}$, and for $r_1\neq r_2$, $\{Z_l^{r_1 \rightarrow i}(m)\}_{m \geq 1} $ are independent of $\{Z_l^{r_2 \rightarrow i}(m)\}_{m \geq 1} $, and 
        \[\Phi(k) = \mathbb{P}\left(\sum_{r \in \mathcal{C}} \sum_{m=1}^{k_r} Z_l^{r \rightarrow i}(m)   < \delta |k|\right)\]
        for $k = (k_1, \dots, k_d) \in \mathbb{N}^d$.
        For a vector $j = (j_1, \dots, j_d) \in \mathbb{N}^d$ satisfying $j_r \leq k_r$ for all $r \in \mathcal{C}$, define \[
        A_{(j_1, \dots, j_d)} =  \bigcap_{r \in \mathcal{C}}\left\{ \Big|\left\{ m \leq k_r: Z_l^{r \rightarrow i}(m) > 0 \right\}\Big| = j_r \right\}.
        \]
        Observe 
        \begin{align*}\mathbb{P}(A_{(j_1, \dots, j_d)}) &= \prod_{r \in \mathcal{C}} \binom{k_r} {j_r} q_{r,i}^{k_r - j_r} \left(1 - q_{r,i}\right)^{j_r} \leq  \prod_{r \in \mathcal{C}} \binom{k_r} {j_r} q_{\max}^{k_r - j_r} \left(1 - q_{\min}\right)^{j_r}  \\&= \prod_{r \in \mathcal{C}} \binom{k_r} {j_r}  q_{\max}^{k_r} \left(\frac {1 - q_{\min}} {q_{\max}}\right)^{j_r} = q_{\max}^{|k|} C^{|j|} \prod_{r \in \mathcal{C}} \binom{k_r} {j_r}\end{align*}
    where $C = \frac {1 - q_{\min}} {q_{\max}}$. By the generalized Vandermonde's identity, for any $j \in \mathbb{N_+}$
    \[
    \sum_{j_1 + \dots + j_d = j} \prod_{r \in \mathcal{C}} \binom{k_r} {j_r} = \binom{|k|} {j}
    \]
    where the sum $\sum_{j_1 + \dots + j_d = j}$ goes over all partitions of $j$, and we put $\binom{n}{m} = 0$ if $m>n$. Hence
    \begin{align*}
        \Phi(k) &= \sum_{j = 1}^\infty \sum_{j_1 + \dots + j_d = j} \mathbb{P}\left(\sum_{r \in \mathcal{C}} \sum_{m=1}^{k_r} Z_l^{r \rightarrow i}(m)   < \delta |k|, \hspace{2mm} A_{j_1, \dots, j_2}\right) \\&\leq \sum_{j = 1}^\infty \sum_{j_1 + \dots + j_d = j} \mathbb{P}\left(j   < \delta |k|, \hspace{2mm} A_{j_1, \dots, j_2}\right) = \sum_{j = 1}^{\lfloor\delta|k|\rfloor} \sum_{j_1 + \dots + j_d = j} \mathbb{P}\left(A_{j_1, \dots, j_2}\right) 
        \\&\leq\sum_{j = 1}^{\lfloor\delta|k|\rfloor} \sum_{j_1 + \dots + j_d = j} q_{\max}^{|k|} C^{j} \prod_{r \in \mathcal{C}} \binom{k_r} {j_r}  = q_{\max}^{|k|}\sum_{j = 1}^{\lfloor\delta|k|\rfloor} C^{j}\binom{|k|} {j} \\&\leq q_{\max}^{|k|}\sum_{j = 1}^{\lfloor \delta|k|\rfloor} C^{j}\frac {|k|^j} {j !} = q_{\max}^{|k|}\sum_{j = 1}^{\lfloor \delta|k|\rfloor} \left(\frac {C}{\delta}\right)^{j}\frac {(\delta|k|)^j} {j !}.
    \end{align*}
    Choosing $\delta$ so that $\frac {C} \delta > 1$, we have
    \begin{align*}
        q_{\max}^{|k|}\sum_{j = 1}^{\lfloor \delta|k|\rfloor} \left(\frac {C}{\delta}\right)^{j}\frac {(\delta|k|)^j} {j !} &\leq q_{\max}^{|k|} \left(\frac {C}{\delta}\right)^{\lfloor \delta|k|\rfloor} \sum_{j = 1}^{\lfloor \delta|k|\rfloor} \frac {(\delta|k|)^j} {j !} \leq q_{\max}^{|k|} \left(\frac {C}{\delta}\right)^{\delta|k|} e^{\delta|k|} \\&= \left(q_{\max}\left( \frac {C e}{\delta}\right)^{\delta} \right)^{|k|} 
    \end{align*}
    Since $\left( \frac {C e}{\delta}\right)^{\delta} \rightarrow 1$ as $\delta \rightarrow 0$ and $q_{\max} < 1$, choosing $\delta$ small enough we have 
    \[\Phi(k) \leq \beta_0^{|k|}
    \]
    for $\beta_0 < 1$. Hence
    \[
    \mathbb{P}\left(Z_n^i < \delta |Z_{n-l}|\right) = \mathbb{E}[\Phi(Z_{n-l})] \leq \mathbb{E}[\beta_0^{Z_{n-l}}] \leq \beta_0^n + \mathbb{P}(Z_{n-l} < n)
    \]
    Since for any $\varepsilon$ satisfying $\rho(1-\varepsilon) > 1$ we have $n < \rho^n(1-\varepsilon)^n$ for large enough $n$, the bound 
    \[
    \mathbb{P}(Z_{n-l} < n) < C_1\beta_0^n
    \]
    is a straightforward conclusion from Lemma \ref{lemma}. Hence
    \[
    \mathbb{P}\left(Z_n^i < \delta |Z_{n-l}|\right) \leq (1+C_1)\beta_0^n \leq \beta^n 
    \]
    for some $\beta < 1$ and all large enough $n$.
        \end{proof}
        \begin{proof} [Proof of the Theorem]
    We start with the upper bound. Let $\eta$ be a random variable with the distribution function \[
    F(x) = \begin{cases}
        1 - \max\left\{a_i(x) \exp\{-L_i(x)x^r\} \ : \ i \in \mathcal{C}, r_i = r \right\} \hspace{0.5cm}   &x>0\\
        0  & x \leq 0
    \end{cases}
    \] By choice of $r$ and $L$, there exists a constant $c>0$ such that for all $t>c$ and $i \in \mathcal{C}$
    \[
    \mathbb{P} (\eta \geq t) \geq \mathbb{P} (\xi^i \geq t).
    \]
    Hence, $\eta^c = \eta \boldone_{ \{ \eta > c\} } + c \boldone_{ \{ \eta \leq c\} }$ dominates stochastically $\xi^{i,c} = \xi^i \boldone_{ \{ \xi^i > c\} } + c \boldone_{ \{ \xi^i \leq c\} }$ for all $i \in \mathcal{C}$. Since stochastic dominance is preserved under convolution, we have that for any $x>0$, $n \in \mathbb{N}$ and $v \in \mathbb{T}_n$
    \[
     \mathbb{P} \left( \sum_{k =1}^{n} \eta^c_k  \geq x \right)\geq  \mathbb{P} \left( \sum_{k =1}^{|v|} \xi^c_{v_k}  \geq x \right),
    \]
    where $\{\eta^c_k\}_{k \geq 0} $ are i.i.d. distributed as $\eta^c$. Then
    \[
    \mathbb{P}(S_v \geq \psi(n) x) \leq \mathbb{P} \left( \sum_{k =1}^{|v|} \xi^c_{v_k}  \geq \psi(n) x \right) \leq \mathbb{P} \left( \sum_{k =1}^{n} \eta^c_k  \geq  \psi(n)x \right).
    \]
    Note that 
    \[
      a_{min}(x)\exp\{-L(x)x^r\} \leq1-F(x) \leq a_{max}(x)\exp\{-L(x)x^r\} 
    \] where 
    \begin{align*} a_{min}(x) = \min\left\{a_i(x)\ : \ i \in \mathcal{C}, r_i = r \right\}, \\ a_{max}(x) =  \max\left\{a_i(x)\ : \ i \in \mathcal{C}, r_i = r \right\}. \end{align*}
    Since $a_{min}$, $a_{max}$, and $L$ are all slowly varying, Theorem 3 along with the remark (see the equation (29)) from \cite{Gantert2000} asserts that for all $x>0$
    \begin{equation}\label{gantertlemma}
        \lim \frac 1 n \log \mathbb{P} \left(\sum_{k =1}^{n} \eta^c_k \geq \psi(n) x \right) = -x^r.
    \end{equation}
    In particular, for any $\varepsilon > 0$, there is $\delta >0 $ such that for all large enough $n$,
    \begin{equation}
        \mathbb{P}\left(S_v \geq \psi(n) (\log \rho + \varepsilon)^{\frac 1 r}\right) \leq \exp \left\{ -n(\log \rho + \delta) \right\} = \rho^{-n} e^{ -n \delta}
    \end{equation}
    for any $v \in \mathbb{T}_n$.
Having this bound, we proceed as in \cite{Gantert2000}. We have
\begin{equation}\label{calc}
\begin{aligned}
    &\mathbb{P}\left(\exists v \in \mathbb{T}_n : S_v \geq \psi(n) (\log \rho + \varepsilon)^{\frac 1 r}\right) \\ &=\sum_{k=1}^\infty \mathbb{P}\left( \exists v \in \mathbb{T}_n : S_v \geq \psi(n) (\log \rho + \varepsilon)^{\frac 1 r} \Big| |Z_n| = k \right) \mathbb{P} (|Z_n| = k) \\ 
    &\leq \sum_{k=1}^\infty k \rho^{-n} e^{ -n \delta} \mathbb{P} (|Z_n| = k) = 
    \mathbb{E}[|Z_n|] \rho^{-n} e^{ -n \delta},
\end{aligned}
\end{equation}
where $|Z_n| = \sum_{i \in \mathcal{C}} Z^i_n$.
It is easily verifiable by induction, that 
\[
\mathbb{E}[Z_n] = M^n \mathbb{E}[Z_0],
\]
where $M$ is the mean matrix. Hence, $ \mathbb{E}[Z_n] \rho^{-n}$ has a limit, and by linearity so does $\mathbb{E}[|Z_n|] \rho^{-n}$. Applying the Borel-Cantelli lemma to \eqref{calc} and letting $\varepsilon \rightarrow 0$ entails the upper bound in Theorem \ref{t2}.

The lower bound requires more delicate approach. For $K > 0$, let \[\mathbb{T}^K = \{v \in \mathbb{T} : \forall_{k \leq |v|} \xi_{v_k} \geq -K\},\] and $M^K_n = \max \{\xi_v: |v|=n, v \in \mathbb{T}^K\}$, and denote by $\rho_K$ the Perron-Frobenius eigenvalue of the matrix $\{\mathbb{E}[N_{i,j}]\mathbb{P}(\xi^j > -K)\}_{i,j \in \mathcal{C}}$. Since $\rho_K \rightarrow \rho > 1$ as $K \rightarrow \infty$, choose $K$ large enough so that $\rho_K > 1$. Note that 
\begin{align}
    R_n = \max_{|v|=n} S_v \geq \max_{|v|=n, v \in \mathbb{T}^K} S_v \geq M^K_n - (n-1)K.
\end{align}
By Remark \ref{t2:remark}, $\frac {n-1} {\psi(n)} \rightarrow 0$, hence dividing by $\psi(n)$ and taking limits yields 
\begin{align}
\liminf_{n \rightarrow \infty} \frac {R_n} {\psi(n)} \geq \liminf_{n \rightarrow \infty}\frac {M^K_n} {\psi(n)}.
\end{align}
    Hence, it suffices to show 
    \begin{align}\label{max1} \liminf_{n \rightarrow \infty}\frac {M^K_n} {\psi(n)} \geq  (\log \rho_K)^{\frac 1 {r}}
    \end{align}
    By the Borell-Cantelli lemma, to show \eqref{max1} it is enough to show that for any $\varepsilon > 0$
    \begin{align}\label{max} \sum_{n=0}^\infty \mathbb{P} \left( \frac {M^K_n} {\psi(n)} < \left[\log \left\{\rho_K(1-\varepsilon)\right\}\right]^{\frac 1 {r}} \right) <\infty.
    \end{align}
    To that end, take any $\varepsilon > 0$ small enoguh to satisfy $\rho_K(1-\varepsilon) >1$ and let \[Z^{K,i}_n = \#\{v \in \mathbb{T}^K: \sigma(v) =i, |v|=n\},\] and $Z^K_n = \sum_{i \in \mathcal{C}} Z^{K,i}_n$. To simplify the notation, denote \[b_n = \psi(n) \left[\log \left\{\rho_K(1-\varepsilon)\right\}\right]^{\frac 1 {r}}\] and let \[
    I(n) = \argmax_{i \in \mathcal{C}, r_i = r} L_i\left( b_n \right) .
    \]  Then
    \begin{align*}
        &\mathbb{P} \left( \frac {M^K_n} {\psi(n)} < \left[\log \left\{\rho_K(1-\varepsilon)\right\}\right]^{\frac 1 {r}} \right) = \mathbb{E} \left[ \prod_{i \in \mathcal{C}}\mathbb{P} \left( \xi^i < b_n  \right)^{Z^{K,i}_n} \right] \\&=
        \mathbb{E} \left[\prod_{i \in \mathcal{C}} \left(1- \mathbb{P} \left( \xi^i \geq b_n  \right)\right)^{Z^{K,i}_n} \right] \leq
        \mathbb{E} \left[ \prod_{i \in \mathcal{C}}\exp \left\{ -{Z^{K,i}_n} \mathbb{P} \left( \xi^i \geq b_n  \right) \right\} \right] \\&\leq
        \mathbb{E} \left[ \exp \left\{ -{Z^{K,I(n)}_n} \mathbb{P} \left( \xi^{I(n)} \geq b_n  \right) \right\} \right]
    \end{align*}
    Using our assumption \ref{semiexp}, we have
    \begin{align*}
        &\mathbb{E} \left[ \exp \left\{ -{Z^{K,I(n)}_n} \mathbb{P} \left( \xi^{I(n)} \geq b_n  \right) \right\} \right] \\&=
        \mathbb{E} \left[ \exp \left\{ -{Z^{K,I(n)}_n} a_{I(n)}\left(  b_n \right)  \left(\rho_K(1-\varepsilon)\right)^{- L_{I(n)}\left( b_n \right) \psi(n)^r} \right\} \right]
    \end{align*}
    Note that $L_{I(n)}\left( b_n \right) = L\left( b_n \right)$, hence by choice of $L$ (see \eqref{t2:norm}), 
    \[\frac{L_{I(n)}\left( b_n \right) \psi(n)^r} n \rightarrow 1, \]and recall that all $a_i$ are slowly varying. Hence we have 
    \[
    a_{I(n)}\left(  b_n \right)  \left(\rho_K(1-\varepsilon)\right)^{- L_I\left( b_n \right) \psi(n)^r} \geq \left(\rho_K(1-\varepsilon_1)\right)^{-n}
    \]
    for some $\varepsilon_1 \in (0,\varepsilon)$ and all sufficiently large $n$.
    Hence, by Borel-Cantelli lemma, \eqref{max} will follow from the convergence of the series 
    \[
    \sum_{n=1}^\infty \mathbb{E} \left[ \exp \left\{ -\frac {Z^{K,I}_n} {\left(\rho_K(1-\varepsilon_1)\right)^n} \right\}\right].
    \]
     Using the formula 
    \[
     \mathbb{E} \left[ \exp \left\{ -\frac {Z^{K,I}_n} {\left(\rho_K(1-\varepsilon_1)\right)^n} \right\}\right] \leq  \exp \left\{ -\frac {\left(1-\frac {\varepsilon_1} 2\right)^n} {\left(1-\varepsilon_1\right)^n} \right\} + \mathbb{P}\left(\frac {Z^{K,I}_n} {\rho_K^n} \leq \left(1-\frac {\varepsilon_1} 2\right)^n \right)
    \]
    we see that it is sufficient to show 
    \begin{align*}\label{sum}
        \sum_{n=1}^\infty \mathbb{P}\left(\frac {Z^{K,I}_n} {\rho_K^n} \leq \left(1-\frac {\varepsilon_1} 2\right)^n \right) < \infty.
    \end{align*}
    and this is a straightforward consequence of applying Lemmas \ref{lemma2} and \ref{lemma}.
\end{proof}

\section{Reducible multi-type branching random walk}
In this chapter, we consider the case where the mean matrix $M$ is reducible. We divide $\mathcal{C}$ by the following equivalence relation: $i\sim j$ if there are $l_1$, $l_2$ such that $M^{l_1}(i,j) >0$ and $M^{l_2}(j,i) > 0$. We denote $\mathcal{C}_\sim = \{\mathcal{C}_1,\mathcal{C}_2, \dots, \mathcal{C}_m\}$ and introduce a partial ordering of $\mathcal{C}$ through the following relation: $i \preceq j$ if there exists $n \in \mathbb{N}$ such that $M^n(i,j) >0 $. This induces a partial ordering of $\mathcal{C}_\sim$. We will abuse the notation and write $a \preceq b$ when there exist $i \in \mathcal{C}_a$ and  $j \in \mathcal{C}_b$  such that $i \preceq j$, and $a \preceq i$ if $i \in \mathcal{C}_b$ and $a \preceq b$. By renumbering the types, we may and will assume that $M$ is of form
\begin{equation}\label{matrix}
    \begin{pmatrix}
        M[1] & M[1,2] & \dots & M[1,m] \\
        0 & M[2] & \dots & M[2,m]\\
        \vdots & \vdots & \vdots & \cdots \\
        0 & \dots & 0 & M[m]
    \end{pmatrix}
    .
\end{equation}
 That is, it has cages $\{M[a]\}_{a \in \mathcal{C}_\sim}$ on the diagonal and zeros below.  Throughout the chapter, we make the following assumptions. 

\begin{equation}\label{as4}
    M[a] \text{ is positively regular in the sense of }\eqref{as3}\text{ for all } a \leq m.
\end{equation}
 For any $a \leq m$, denote by $\rho(a)$ the largest eigenvalue of $M[a]$ and assume
  \begin{equation}\label{as5}
      \rho(1) >1 
  \end{equation}
  It is easy to see that the spectrum of $M$ is just a union of spectrums of $M[a]$'s.

 It is clear that when the starting particle comes from $\mathcal{C}_a$ the problem is reduced to the analysis of the types of classes following (and including) $a$, hence without loss of generality we may assume that the starting particle's type belongs to class 1. Since the specific type will be of little significance, we will assume for simplicity that the starting particle is of type 1. Similarly, if some class does not follow the first class, it will never appear in the process, so we assume $1 \preceq a$ for all $a \leq m$. To avoid conditioning on the survival set, we assume that type 1 (or equivalently class 1) survives with probability 1. 

    Analogously to the previous section, we assume the following Kesten-Stigum condition: \begin{equation}\label{as6}
       \text{for all } a \leq m\text{ and all pairs } i,j \in \mathcal{C}_a,   \   \mathbb{E}[N_{i,j} \log N_{i,j}] < \infty.
   \end{equation}
   By the result of Kesten and Stigum \cite{kesten2}, under this assumptions, if \[\rho_j = \max{\{\rho_a \ : \ a \in \mathcal{C}_{\sim}, a \preceq j}\}\] then for some $k>0$,
   \begin{align}\label{kestenr}
   \frac {Z^j_n} {n^k\rho_j^n} \xrightarrow{\mathbb{P}-a.s.} W(j), 
   \end{align}
   and $W(j)$ is positive if $u^\alpha_j$ is positive, where $u^\alpha$ is the left eigenvector of $M[\alpha]$. In other words, the asymptotic number of particles of any given type is driven by the number of particles preceding it. More explicit expressions can be provided for $W(j)$ in certain examples, but a general formula seems difficult to obtain. One can show that the randomness in $W(j)$ is contained in the preceding classes that have the highest number of offspring, that is, if for some classes $\alpha$ and $\beta$ we have $\rho_\beta  < \rho_\alpha = \max{\{\rho_\gamma \ : \ \gamma \preceq \beta}\}$, then $W(\beta) = (W(j))_{j \in \mathcal{C}_\beta}$ is a deterministic linear transformation of $W(\alpha)$. We refer to \cite{kesten2} for a more detailed exploration of the properties of $W$. 

\subsection{Displacements with regularly varying tails}
Let $F_i(x) = \mathbb{P}\left(\xi^i \leq x\right)$. In this section, analogously to the irreducible case, we assume that the displacements are independent and there exist slowly varying functions $\{L_i\}_{i \in \mathcal{C}}$ and positive constants $\{r_i\}_{i \in \mathcal{C}}$, satisfying
\begin{align}\label{t3:tails}
\begin{aligned}
     &1-F_i(x) \sim L_i(x) x^{-r_i} \hspace{0.5cm} \text{as }x \rightarrow \infty, \\
     &\log(-x)F_i(x)\rightarrow 0 , \hspace{0.5cm}  \text{as }x \rightarrow -\infty.
\end{aligned}
\end{align}

   In contrast to the assumptions of Theorem \ref{t1}, this time we assume that there is a unique pair $(\alpha,I)$ satisfying \begin{align*}
   &\rho_{\alpha}^{\frac 1 {r_I}} = \max{\{\rho_a^{\frac 1 {r_i}} \ : \ a \preceq i \}}
   \end{align*}
   It is perhaps worth noting that unlike in the irreducible model, $r_I$ is not necessarily the minimum of all $r_i$'s, nor is $\rho$ the principal eigenvalue of $M$. This is due to the fact that the growth speed of a single cage $a$, just as we have seen in Theorem \ref{t1}, is exponential at the rate $\rho_a^{\frac 1 {r_a}}$, where $r_a$ is the minimal exponent among the types from this cage. In other words, the speed depends on the interplay between the tails of the displacements and the asymptotic expected number of particles. Since in the reducible case the latter may be different for different classes, choosing the "dominant" type, and therefore the correct normalization, requires us to look at both of these quantities. 
   
   We denote $\rho=\rho_{\alpha}$ and $r = r_I$.
    Furthermore, let $k >0$ be the constant satisfying \[
   \frac {Z^I_n} {n^k\rho^n} \xrightarrow{\mathbb{P}-a.s.} W(I) 
   \]

 Our result is as follows.
\begin{theorem}\label{t3} Let \[\zeta =\sum_{j>0} \rho^{-j} \sum_{l \in \mathcal{C}}    \mathbb{P}_I\left(Z_j^l> 0\right).\] and choose the sequence $\{a_n\}_{n \in \N}$ so that 
\begin{equation*}\label{an2}
    n^k\rho^n \left(1-F_I(a_n)\right) \xrightarrow[ n \rightarrow \infty]{} 1.
\end{equation*}
Then
\[
\mathbb{P} ( R_n \leq a_n x)  \xrightarrow[ n \rightarrow \infty]{} \E[e^{-\zeta W(I)x^{-q}}]
\]
\end{theorem}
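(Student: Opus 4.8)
The plan is to mirror the proof of Theorem \ref{t1}, replacing the irreducible total-population lemma (Lemma \ref{totalpopulation}) and the normalization $\rho^n$ with their reducible analogues built on \eqref{kestenr}. First I would establish a reducible version of Lemma \ref{totalpopulation}: for $i \in \mathcal{C}$ define $Y^i_n$ as before (number of type-$i$ particles with descendants in generation $n$), and show that $Y^i_n / (n^k \rho^n)$ converges a.s. to a constant multiple of $W(I)$. The decomposition $Y^i_n = \sum_{j=0}^{n-1}\sum_{l}\sum_{k=1}^{Z^i_{n-j}} \boldone_{\{Z^l_j(i,k)>0\}}$ still holds; the difference is that now $Z^i_{n-j}/((n-j)^k \rho^{n-j})$ converges to $W(i)$, which is a deterministic linear image of $W(I)$ for all $i$ in classes not strictly dominated (and the contribution of strictly dominated classes, growing like $n^{k'}\rho'^n$ with $\rho'^{1/r'} < \rho^{1/r}$ — wait, more precisely with $(n^{k'}\rho'^n)/(n^k\rho^n) \to 0$ — is negligible). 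A minor technical point: the ratio $((n-j)^k\rho^{n-j})/(n^k\rho^n) \to \rho^{-j}$ for fixed $j$, so the polynomial factor washes out in the tail sum exactly as the $\rho^{-j}$ did before, and the dominated-tail estimate $\sup_k D_k < \infty$ (with $D_k = Z^i_k/(k^k\rho^k)$) goes through. This yields $Y^i_n/(n^k\rho^n) \to \zeta_i W(I)$ for suitable constants $\zeta_i$, with $\zeta_I = \zeta$.

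Next, with $M_n = \max\{\xi_{v_k} : |v|=n,\, k \le n\}$, the same independence argument gives $\mathbb{P}(M_n \le a_n x) = \mathbb{E}[\prod_i F_i(a_n x)^{Y^i_n}]$, and writing $F_i(a_n x)^{Y^i_n} = \exp\{(Y^i_n/(n^k\rho^n))\, n^k\rho^n \log F_i(a_n x)\}$ I would use the choice $n^k \rho^n(1 - F_I(a_n)) \to 1$ together with regular variation to get $n^k\rho^n \log F_I(a_n x) \to -x^{-r}$, hence $F_I(a_n x)^{Y^I_n} \to \exp\{-\zeta W(I) x^{-r}\}$ a.s. For $i \ne I$ I need $n^k\rho^n(1 - F_i(a_n x)) \to 0$: when $i$ lies in class $\alpha$ (or a non-strictly-dominated class) this follows because $r_i > r_I$ forces $(a_n x)^{r - r_i} L_i(a_n x)/L_I(a_n x) \to 0$, exactly as in \eqref{th:eq1}; when $i$ lies in a strictly dominated class one additionally uses that $n^k \rho^n (1-F_i(a_n x))$ compares $Y^i_n \asymp n^{k_i}\rho_i^n$ against $1-F_i(a_n x) \asymp L_i(a_n) a_n^{-r_i}$ and the definition of the dominant pair $(\alpha, I)$ as the \emph{unique} maximizer of $\rho_a^{1/r_i}$ guarantees $n^{k_i}\rho_i^n (1-F_i(a_n x)) \to 0$. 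So $\prod_i F_i(a_n x)^{Y^i_n} \to \exp\{-\zeta W(I) x^{-r}\}$ a.s. and dominated convergence gives $\mathbb{P}(M_n \le a_n x) \to \mathbb{E}[\exp\{-\zeta W(I) x^{-r}\}]$ (note the statement's exponent $q$ should read $r$).

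It then remains to show $\mathbb{P}(M_n \le a_n x) \sim \mathbb{P}(R_n \le a_n x)$, i.e. the two estimates \eqref{upperbound} and \eqref{lowerbound}. The upper bound $\mathbb{P}(R_n > a_n x, M_n \le a_n(x-\varepsilon)) \le \mathbb{E}[Z_n(a_n x,\infty)\boldone_{M_n \le a_n(x-\varepsilon)}] \le \sum_{\vec n} A_{\vec n}(F^{(a_n(x-\varepsilon))}_{\vec n}(\infty) - F^{(a_n(x-\varepsilon))}_{\vec n}(a_n x))$ uses verbatim the mixed-distribution convolution bound \eqref{t1:eq8} established for Theorem \ref{t1}, so $F^{(a_n(x-\varepsilon))}_{\vec n}(\infty) - F^{(a_n(x-\varepsilon))}_{\vec n}(a_n x) \le C''\rho^{-np'}$ for some $p' > 1$; summing gives $C''\rho^{-np'} \mathbb{E}[|Z_n|] = C''\rho^{-np'}|M^n Z_0|$, and since $M^n$ grows polynomially-times-$\rho^n$ (or slower in dominated blocks, but $\rho$ here is $\rho_\alpha = \rho(1)$ if $\alpha = 1$ — in general $\mathbb{E}[|Z_n|] \le C n^K \rho_{\max}^n$ with $\rho_{\max}$ the largest cage eigenvalue), I need $p'$ large enough that $\rho^{-np'} n^K \rho_{\max}^n \to 0$; this is where I must be slightly careful, choosing $s$ close to $r$ and $\delta$ small so that $p' = \frac{s}{p}\theta$ can be taken above $\log \rho_{\max}/\log \rho$, which is possible since $\theta = x(1-\delta)/(x-\varepsilon) > 1$ can be pushed up by shrinking $\varepsilon$. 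The lower bound \eqref{lowerbound} is handled exactly as in Theorem \ref{t1}: bound $R_n$ from below by the position $Q_n$ of a descendant of the path attaining $M_n$, reduce to $\mathbb{P}(Q_n - M_n \le -a_n\varepsilon) \le \mathbb{P}(M_n \le 0) + \mathbb{E}[\bar F^0_{\vec T(n)}(-a_n\varepsilon)]$, peel off types one at a time to get $\le \sum_i \mathbb{E}[\bar F^0_n(-a_n\delta_i)]$, and invoke Step 4 of \cite{Durrett1983} (which needs only the left-tail condition \eqref{t3:tails} and exponential growth of $a_n$, both available here). The main obstacle I anticipate is the bookkeeping in the reducible total-population lemma — tracking which classes contribute to the a.s. limit versus which are asymptotically negligible, and confirming that the polynomial correction $n^k$ does not interfere with either the fixed-$j$ limits or the tail bound — together with verifying in the upper-bound step that the moment bound $\mathbb{E}[|Z_n|]$ is still dominated by $\rho^{-np'}$ for an admissible choice of exponents.
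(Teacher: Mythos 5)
Your treatment of $M_n$ (the per-type counts $Y^i_n$, the choice of normalization, and the verification that only type $I$ contributes to the limit of $\prod_i F_i(a_nx)^{Y^i_n}$) matches the paper's first step, up to the imprecision that $Y^i_n$ should be normalized by its own rate $n^{k_i}\rho_i^n$ rather than by $n^k\rho^n$ for every $i$; that part is repairable. The genuine gap is in your upper-bound step \eqref{upperbound}. You apply the mixed-convolution bound \eqref{t1:eq8} ``verbatim'' across \emph{all} types and then sum against $\mathbb{E}[|Z_n|]$. But in the reducible setting the paper stresses that $r=r_I$ need not be $\min_i r_i$ and $\rho=\rho_\alpha$ need not be the largest cage eigenvalue. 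The bound \eqref{t1:eq8} forces $s<\min_i r_i$, which may be strictly smaller than $r$, while $a_n\approx\rho^{n/r}$ and $\mathbb{E}[|Z_n|]\asymp n^K\rho_{\max}^n$ with possibly $\rho_{\max}>\rho$. So the quantity you must kill is of order $\rho^{-n(s/r)\theta}\,n^K\rho_{\max}^n$, and since the sandwich argument requires \eqref{upperbound} for \emph{arbitrarily small} $\varepsilon$, the exponent $\theta=x(1-\delta)/(x-\varepsilon)$ is pinned just above $1$; your proposed fix of ``pushing $\theta$ up by shrinking $\varepsilon$'' goes in the wrong direction ($\theta\downarrow 1$ as $\varepsilon,\delta\downarrow 0$). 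Concretely, if a sub-dominant class carries a tail index $r_j<r$, or a lighter-tailed class has eigenvalue exceeding $\rho$ (exactly the anomalous configurations this theorem is meant to cover), then $(s/r)\theta\log\rho>\log\rho_{\max}$ is unattainable and your estimate diverges, even though those particles are in fact harmless for $R_n$; the crude first-moment bound over the whole population cannot see that no single ancestral line combines the heavy tail with the large population.

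The paper closes this hole by inducting on the number of classes $m$ rather than copying Theorem \ref{t1} wholesale. It first proves the strengthened negligibility statement for $M^i_n$, $i\neq I$ (Remark \ref{t3:remark2}), then splits $R_n=\max(R^1_n,R^2_n)$ according to whether the terminal particle lies in the last class, and distinguishes two cases: if the dominant pair lives in the first $m-1$ classes, the class-$m$ contribution is bounded by $n\max_i\tilde M^i_n+\tilde R^1_n$ with $\tilde R^1_n$ handled by the induction hypothesis; if $I\in\mathcal{C}_m$, then $R^1_n/a_n\to0$ by induction, and the convolution bound is applied only to the displacements accumulated \emph{inside} class $m$ (the sums $X_v$), where all tail indices are at least $r$ and the relevant expected count grows at rate exactly $n^k\rho^n$, so the choice $p'>1$ suffices. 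Some class-by-class decomposition of this kind (or an equivalent conditioning on the type sequence of the ancestral line) is indispensable; without it the key estimate in your plan fails precisely where you flagged that you ``must be slightly careful.''
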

\begin{remark}\label{t3:remark}\emph{
    As in the irreducible case, the existence of $a_n$ satisfying \eqref{t1:an} is guaranteed by the result of de Bruijn \cite{DeBruijn}. Here if $L^\#$ is the de Bruijn conjugate of $L$, we can take $a_n = L^\#\left(n^{\frac k r}\rho^{\frac n {r}}\right) n^{\frac k r}\rho^{\frac n {r}}$. In particular, this guarantees that for any $\varepsilon > 0$, \begin{equation}
        \rho^{\frac n r (1-\varepsilon)} < a_n < \rho^{\frac n r (1+\varepsilon)}
    \end{equation}
    for sufficiently large $n$.
}\end{remark}

\begin{proof}[Proof of the Theorem \ref{t3}]
    Similarly to the irreducible case, we begin with a lemma on the total population. Recall that for $i \in \mathcal{C}$,  \[
    Y^i_n = \left|\bigcup_{k =1}^{n}\left\{ v \in \mathbb{T}_k \ : \ \sigma(v) =i, (\exists w \in \mathbb{T}_n)(w_k = v) \right\}\right|
    \]
is the total number of particles of type $i$ that have offspring in the $n$-th generation.
\begin{lemma}\label{t3:lemmatotal}
    Assume $\eqref{as4}$ and $\eqref{as5}$ and let  \[\zeta_i =\sum_{j>0} \rho^{-j} \sum_{l \in \mathcal{C}}    \mathbb{P}_i\left(Z_j^l> 0\right).\] Then for all $i \in \mathcal{C}$,
    \[
    \frac{Y^i_n} {\rho_i^n n^{k_i}} \rightarrow \zeta_i W(i).
    \]
    where $\rho_i, k_i, W(i)$ are as in \eqref{kestenr}.
\end{lemma}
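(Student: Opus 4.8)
The plan is to mimic the proof of Lemma \ref{totalpopulation} from the irreducible case, but to keep careful track of the polynomial correction factor $n^{k_i}$, which is the new feature in the reducible regime. As before, I would first write the decomposition
\[
Y^i_n = \sum_{j=0}^{n-1} \sum_{l \in \mathcal{C}} \sum_{k=1}^{Z^i_{n-j}} \boldone_{\{Z^l_j(i,k) > 0\}},
\]
where for fixed $j, l$ the $Z^l_j(i,k)$ are i.i.d.\ copies of $Z^l_j$ under $\mathbb{P}_i$, independent across different ancestors $k$. Dividing by $\rho_i^n n^{k_i}$ and inserting the factor $\frac{Z^i_{n-j}}{\rho_i^{n-j}(n-j)^{k_i}}$, I would write
\[
\frac{Y^i_n}{\rho_i^n n^{k_i}} = \sum_{j=0}^{n-1} \rho_i^{-j} \frac{(n-j)^{k_i}}{n^{k_i}} \sum_{l \in \mathcal{C}} \frac{Z^i_{n-j}}{\rho_i^{n-j}(n-j)^{k_i}} \cdot \frac{1}{Z^i_{n-j}} \sum_{k=1}^{Z^i_{n-j}} \boldone_{\{Z^l_j(i,k)>0\}}.
\]
For each fixed $j$, the strong law of large numbers gives $\frac{1}{Z^i_{n-j}}\sum_k \boldone_{\{Z^l_j(i,k)>0\}} \to \mathbb{P}_i(Z^l_j>0)$ a.s., the Kesten--Stigum convergence \eqref{kestenr} gives $\frac{Z^i_{n-j}}{\rho_i^{n-j}(n-j)^{k_i}} \to W(i)$ a.s., and trivially $\frac{(n-j)^{k_i}}{n^{k_i}}\to 1$. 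So each summand converges to $\rho_i^{-j} \sum_l W(i)\,\mathbb{P}_i(Z^l_j>0)$, and summing over $j$ formally yields $\zeta_i W(i)$.

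The work is in justifying the interchange of the limit with the infinite sum, i.e.\ a dominated-convergence / uniform-tail argument as in the irreducible proof. Fixing $N$, I would bound the tail $\sum_{j=N+1}^{n-1}$ by pulling out $\sup_{k \geq N+1}\frac{Z^i_k}{\rho_i^k k^{k_i}}$ (which is a.s.\ finite, again by \eqref{kestenr}), a supremum $\sup_{j}\frac{(n-j)^{k_i}}{n^{k_i}} \le 1$, a factor $d$ from summing over $l \in \mathcal{C}$, and the geometric tail $\sum_{j>N}\rho_i^{-j} = \frac{\rho_i^{-N}}{\rho_i-1}$ — this uses $\rho_i \ge \rho_1 > 1$, which holds for every $i$ since $1 \preceq i$ and $\rho_\alpha \geq \rho_1$; I should double-check $\rho_i>1$ for all relevant $i$, which follows from \eqref{as5} together with $1 \preceq a$ for all $a$. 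This gives
\[
\limsup_n \frac{Y^i_n}{\rho_i^n n^{k_i}} \le W(i) \sum_{j=0}^{N} \rho_i^{-j} \sum_{l \in \mathcal{C}} \mathbb{P}_i(Z^l_j > 0) + d\,\frac{\rho_i^{-N}}{\rho_i-1}\,\sup_{k\ge N+1}\frac{Z^i_k}{\rho_i^k k^{k_i}},
\]
and letting $N \to \infty$ yields the upper bound $\limsup_n \frac{Y^i_n}{\rho_i^n n^{k_i}} \le \zeta_i W(i)$ a.s. For the lower bound, truncating the sum at $N$ and using nonnegativity, $\frac{Y^i_n}{\rho_i^n n^{k_i}} \ge \sum_{j=0}^{N}\rho_i^{-j}\frac{(n-j)^{k_i}}{n^{k_i}}\sum_l \frac{Z^i_{n-j}}{\rho_i^{n-j}(n-j)^{k_i}}E^l_{n-j}$, taking $\liminf_n$ and then $N \to \infty$ gives $\liminf_n \frac{Y^i_n}{\rho_i^n n^{k_i}} \ge \zeta_i W(i)$ a.s., which completes the proof.

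The main obstacle I anticipate is making sure the polynomial factors behave well: unlike the purely geometric case, the ratio $(n-j)^{k_i}/n^{k_i}$ depends on both $n$ and $j$, and I need that it is bounded by $1$ (or by a constant) uniformly in $j \le n$ so it does not spoil the geometric tail estimate — this is fine since $(n-j)^{k_i} \le n^{k_i}$ for $0 \le j \le n$ when $k_i \ge 0$, but I should confirm $k_i \ge 0$ from \eqref{kestenr}. A secondary subtlety is that $\rho_i$ and $k_i$ genuinely vary with $i$ (the normalization in the lemma is type-dependent), so one cannot factor out a single $\rho^n$; the argument must be run separately for each $i$, which is harmless but should be stated. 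One should also note that the strong law of large numbers applies to the $E^l_{n-j}$ because, for fixed $j$, as $n$ grows $Z^i_{n-j} \to \infty$ a.s.\ on the (assumed probability-one) survival event, so the number of i.i.d.\ terms being averaged tends to infinity.
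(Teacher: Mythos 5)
Your proposal is correct and follows essentially the same route as the paper's own proof: the same ancestor-counting decomposition, insertion of the factor $\left(\frac{n-j}{n}\right)^{k_i}$ together with the Kesten--Stigum normalization, the strong law for the indicator averages, and the fixed-$N$ truncation with a geometric tail bound for the limsup and a truncated sum for the liminf. Your added checks (that $\rho_i>1$ follows from \eqref{as5} and $1 \preceq a$ for all $a$, and that $(n-j)^{k_i}\le n^{k_i}$ keeps the tail estimate intact) are exactly the points implicitly used in the paper, so nothing is missing.
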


\begin{proof}
The Lemma is proven with analogous arguments as in Lemma \ref{totalpopulation}, but we provide full argument for convenience of the reader. First decompose
\begin{align*}
    \frac {Y^i_n} {\rho^n n^{k_i}} = \sum_{j=0}^{n-1} \rho^{-j}  \sum_{l \in \mathcal{C}} \left(\frac{n-j} {n} \right)^{k_i}\frac {Z^i_{n-j}} {\rho^{n-j}(n-j)^{k_i}}  \frac 1 {Z^i_{n-j}} \sum_{k=1}^{Z_{n-j}^i }\boldone_{\{Z^l_j{(i,k)} >0\}}
\end{align*}
where for any $j$ and $l$, $\{Z^l_j{(i,k)}\}_{k>0} $ are i.i.d. distributed as $Z^l_j$ under $\mathbb{P}_i$, and for $i_1\neq i_2$, $\{Z^l_j{(i_1,k)}\}_{k>0} $ are independent of $\{Z^l_j{(i_2,k)}\}_{k>0} $. Now we denote \[D_{n-j} = \left(\frac{n-j} {n} \right)^{k_i}\frac {Z^i_{n-j}} {\rho^{n-j}(n-j)^{k_i}}  \] and \[E^l_{n-j} = \frac 1 {Z^i_{n-j}} \sum_{k=1}^{Z_{n-j}^i }\boldone_{\{Z^l_j{(i,k)} > 0\}}\] By the strong law of large numbers, for any fixed $j>0$,\[E^l_{n-j}  \xrightarrow[ n \rightarrow \infty]{a.s.} \mathbb{P}_i(Z^l_j > 0) \] We also know from \eqref{kestenr}, that for any fixed $j>0$, \[D_{n-j}  \xrightarrow[ n \rightarrow \infty]{a.s.}W(i)\]
Now fix $N>0$. Then for $n>N$
\begin{align*}
     \frac {Y^i_n} {\rho^n n^{k_i}} &\leq \sum_{j=0}^{N} \rho^{-j}  \sum_{l \in \mathcal{C}} D_{n-j} E^l_{n-j} +  \sum_{l \in \mathcal{C}}\sup_{i\geq N+1}\{D_i \}\sum_{i=N+1}^\infty \rho^{-i} \\&=
     \sum_{j=0}^{N} \rho^{-j}  \sum_{l \in \mathcal{C}} D_{n-j} E^l_{n-j} +  d \frac{\rho^{-N}} {\rho - 1} \sup_{i\geq N+1}\{D_i \}
\end{align*}
So, $\mathbb{P}$-almost surely,
\[
\limsup_{n}  \frac {Y^i_n} {\rho^n n^{k_i}} \leq W(i) \sum_{j=0}^{N}  \rho^{-j}  \sum_{l \in \mathcal{C}} \mathbb{P}_i(Z^l_j > 0) +  \sum_{l \in \mathcal{C}}\sup_{j\geq N+1}\{D_j \}\sum_{j=N+1}^\infty \rho^{-j}
\]
Letting $N \rightarrow \infty$ we get 
\[
\limsup_{n}  \frac {Y^i_n} {\rho^n n^{k_i}} \leq \zeta_iW(i)\hspace{0.5cm} \mathbb{P}-\text{a.s.}
\]
For the bound from below, we note that
\[
    \frac {Y^i_n} {\rho^n} \geq \sum_{j=0}^{N}\rho^{-j}  \sum_{l \in \mathcal{C}} D_{n-j} E^l_{n-j} 
\]
so taking $\liminf\limits_n$ and then letting $N \rightarrow \infty$ we get
\[
\liminf_{n}\frac {Y^i_n} {\rho^n n^{k_i}} \geq \zeta_iW(i)\hspace{0.5cm} \text{a.s.}
\]
concluding the proof of the lemma.
\end{proof}

Now define for $i \in \mathcal{C}$
\[
M^i_n = \max\{\xi_{v_k} \ : \ v \in \mathbb{T}_n, \ v_k \sim i, \ k\leq n\}
\]
We will show that 
\begin{align}\label{t3:maxdisp1}
    \mathbb{P}\left(M^I_n \leq a_n x\right) \xrightarrow[ n \rightarrow \infty]{} \mathbb{E}\left[ \exp \big\{ - \zeta W(I) x^{-r} \big\}\right]
\end{align}
and if $i \neq I$
\begin{align}\label{t3:maxdisp2}
    \mathbb{P}\left(M^i_n \leq a_n x\right) \xrightarrow[ n \rightarrow \infty]{} 1.
\end{align}
As a consequence, of course 
\begin{align}\label{t3:maxdisp3}
        \mathbb{P}\left(M_n \leq a_n x\right) \xrightarrow[ n \rightarrow \infty]{} \mathbb{E}\left[ \exp \big\{ - \zeta W(I) x^{-r} \big\}\right]
\end{align}
where $M_n = \max\limits_{i \in \mathcal{C}} \{M^i_n\}$.
We readily calculate
\[
\mathbb{P}\left(M^i_n \leq a_n x\right) = \mathbb{E} \left[ F_i(a_nx)^{Y^i_n}\right] =   \exp{\left\{\frac {Y^i_n} {\rho^n n^k} \rho^n n^k \log F_i(a_n x)\right\}} .
\]
 Now, $a_n$ was chosen so that $\rho^n n^k (1- F_I(a_n )) \xrightarrow[ n \rightarrow \infty]{}  1$, so we proceed to
\[
\rho^n n^k \log F_I(a_n x) \sim -\rho^n n^k (1- F_I(a_n x )) = -n^k\rho^n (1- F_I(a_n) ) \frac { (1- F_I(a_n x )) } {(1- F_I(a_n ))} \xrightarrow[ n \rightarrow \infty]{} -x^{-r}.
\]
Hence by Lemma \ref{t3:lemmatotal}, \[F_I(a_nx)^{Y^I_n} \xrightarrow[ n \rightarrow \infty]{a.s.}  \exp \big\{ -\zeta W(I) x^{-r} \big\}.\]
 Using the dominated convergence theorem, this proves \eqref{t3:maxdisp1}. Similarly, for $i\neq I$,
\begin{equation}
\begin{aligned}\label{t3:eq1}
   &\frac {Y^i_n} {\rho_i^n n^{k_i}} \frac {\rho_i^n n^{k_i}} {\rho^n n^{k}} \rho^n n^k \left( 1-  F_i(a_n x) \right)=  \frac {Y^i_n} {\rho_i^n n^{k_i}} \frac {\rho_i^n n^{k_i}} {\rho^n n^{k}} \rho^n n^k (1- F_I(a_n x) ) \frac { (1- F_i(a_n x )) } {(1- F_I(a_n x ))} 
\end{aligned}
\end{equation}
Note again that $\rho^n n^k (1- F_I(a_n )) \xrightarrow[ n \rightarrow \infty]{}  1$, and by Lemma \ref{t3:lemmatotal} $\rho_i^{-n}n^{-k_i} Y^i_n$ has a finite limit. To take care of the remaining terms, we note that
\begin{align}\label{t3:eq2}
 \frac {\rho_i^n n^{k_i}} {\rho^n n^{k}}\frac { (1- F_i(a_n x )) } {(1- F_I(a_n x ))} \sim \left(\frac {\rho_i} {\rho} \right)^n \left(\rho^{1-\frac { r_i} {r}  }\right)^n  h(n)
\end{align}
where (see Remark \ref{t3:remark}) \[
h(n) = \frac{ L_i\left( L^{\#}\left( \rho^{\frac n r} n^{\frac k r}\right)\rho^{\frac n r} n^{\frac k r}\right) L^{\#}\left( \rho^{\frac n r} n^{\frac k r}\right)^{r_i}n^{\frac k r r_i}} {L\left( L^{\#}\left( \rho^{\frac n r} n^{\frac k r}\right)\rho^{\frac n r} n^{\frac k r}\right) L^{\#}\left( \rho^{\frac n r} n^{\frac k r}\right)^{r}n^{ k }}.
\] 
Observe 
\[
\left(\frac {\rho_i} {\rho} \right)^n \left(\rho^{1-\frac { r_i} {r}  }\right)^n  =\rho_i^n \rho^{-n\frac {r_i} r}
\]
and since $\rho$ and $r$ were chosen to satisfy
\[
   \rho^{\frac 1 r} = \max{\{\rho_a^{\frac 1 {r_i}} \ : \ a \preceq i \}}
\]
we have for some $\varepsilon > 0$
\[
\rho^{\frac {1-\varepsilon} r} > \rho^{\frac 1 {r_i}}.
\]
Hence,
\[ \rho_i^n \rho^{-n\frac {r_i} r} = \left(\rho_i^{\frac 1 {r_i}}\right)^{n r_i} \rho^{-n\frac {r_i} r} \leq \left(\rho^{\frac {1- \varepsilon} {r}}\right)^{n r_i}\rho^{-n\frac {r_i} r} = \left(\rho^{\frac {r_i \varepsilon} r}\right)^{-n}. \] As $h(n)$ satisfies $\frac {h(n)} {\rho^{\delta n}} \rightarrow 0$ for any $\delta>0$, the right-hand side in \eqref{t3:eq2} goes to 0 as $n \rightarrow \infty$. Using the dominated convergence theorem again, \eqref{t3:maxdisp2} is proven.
\begin{remark}\label{t3:remark2}\emph{
As we see from the proof, we can in fact make even stronger statement than \eqref{t3:maxdisp2}. That is, for $i\neq I$ and small enough $\varepsilon >0$,
\begin{equation*}
        \mathbb{P}\left(M^i_n \leq \rho^{\varepsilon n}a_n x\right) \xrightarrow[ n \rightarrow \infty]{} 1.
\end{equation*}
}
\end{remark}

From here we proceed by induction. If $m=1$, the theorem reduces to Theorem \ref{t1}, so the base case is proven. Assume now that the theorem holds for processes with $m-1$ classes for $m>1$. Then we can write
\begin{align}
    R_n = \max(R^1_n,R^2_n)
\end{align}where 
\begin{align*}
&R^1_n = \max \{S_v \ : \ |v|=n, \ \sigma(v) =i, i \in \bigcup_{i\leq m-1} \mathcal{C}_i\}\\
&R^2_n = \max \{S_v \ : \ |v| = n,\ \sigma(v) =i, i \in  \mathcal{C}_m\}.
\end{align*}
Let $(\beta,J)$ be a pair of a class and a type attaining
\[\max{\{\rho_a^{\frac 1 {r_i}} \ : \ a \preceq i , a \leq m-1\}},\]
and denote $\gamma = \rho_\beta$ and $q = r_J$.
First consider the case when $\gamma^{\frac 1 q} = \rho^{\frac 1 r} > \rho_m^{\frac 1 {r(m)}}$, where $r(m) = \min\limits_{i \in \mathcal{C}_m} r_i$. By induction assumption
\begin{align}\label{t3:upper1}
\limsup_{n \rightarrow \infty}\mathbb{P}\left(\frac {R_n} {a_n} \leq x\right)  \leq \limsup_{n \rightarrow \infty} \mathbb{P}\left(\frac {R^1_n} {a_n} \leq x\right) = \E[e^{-\zeta W(I)x^{-q}}].
\end{align}
For the lower bound, consider a modified process $\tilde{S}_v = \sum_{k=1}^n \max \left( \xi_{v_i}, 0  \right)$ where the displacements are nonnegative. Define analogously
\begin{align*}
&\tilde{R}^1_n = \max \{\tilde{S}_v \ : \ |v|=n, \ \sigma(v) \in \bigcup_{i\leq m-1} \mathcal{C}_i\}\\
&\tilde{R}^2_n = \max \{\tilde{S}_v \ : \ |v| = n,\ \sigma(v) \in  \mathcal{C}_m\} \\
&\tilde{R}_n = \max(\tilde{R}^1_n,\tilde{R}^2_n).
\end{align*}
Note that $\tilde{S}$ satisfies the same assumptions we made on $S$, so all previous results hold. Now for a particle $v \in \mathbb{T}_n$, $\sigma(v) \in \mathcal{C}_m$, we have 
\begin{align}\label{t3:eqq1}
S_v = S_v - S_{v^*} + S_{v^*} \leq S_v - S_{v^*} + R^1_{k^*} \leq n\max_{i \in \mathcal{C}_m}\tilde{M}^i_n + \tilde{R}^1_n
\end{align}
where $v^*$ is the last ancestor of $v$ from the first $m-1$ classes, $k^* = |v^*|$ is its generation, and 
\[
\tilde{M}^i_n = \max\{\xi_{v_k} \ : \ v \in \mathbb{T}_n, \ v_k \sim i, i \in \mathcal{C}_m \ k\leq n\}.
\] 
Taking the supremum over $v$ in \eqref{t3:eqq1}, we have
\[
R^2_n \leq  n\max_{i \in \mathcal{C}_m}\tilde{M}^i_n + \tilde{R}^1_n,
\]
and trivially,
\[
R^1_n \leq  n\max_{i \in \mathcal{C}_m}\tilde{M}^i_n + \tilde{R}^1_n.
\]
Since $I \notin \mathcal{C}_m $, by Remark \ref{t3:remark2}, 
\[\frac{n\max_{i \in \mathcal{C}_m}\tilde{M}^i_n} {a_n} \xrightarrow[n \rightarrow \infty]{d} 0.\]  Hence, 
\begin{align*}
\liminf_{n \rightarrow \infty}\mathbb{P}\left(\frac {R_n} {a_n} \leq x\right) &\geq \liminf_{n \rightarrow \infty} \mathbb{P} \left( n\max_{i \in \mathcal{C}_m}\tilde{M}^i_n + \tilde{R}^1_n  \leq a_n x\right) \\&= \liminf_{n \rightarrow \infty}\mathbb{P}\left(\frac {\tilde{R}^1_n} {a_n} \leq x\right) =  \E[e^{-\zeta W(I)x^{-q}}]
\end{align*}
Together with \eqref{t3:upper1}, we conclude 
\[
\lim_{n\rightarrow \infty}\mathbb{P}\left(\frac {R_n} {a_n} \leq x\right) =  \E[e^{-\zeta W(I)x^{-q}}].
\]
Now consider the case when $\gamma^{\frac 1 q} < \rho^{\frac 1 r}$. Then by induction assumption we know that $\frac {R^1_n} {a^1_n}$ converges in distribution, where for any $\varepsilon >0$, $a^1_n < \gamma^{\frac n q (1+\varepsilon)}$ for sufficiently large $n$. Since $a_n >  \rho^{\frac n r (1-\varepsilon)}$ for any $\varepsilon >0$ and sufficiently large $n$, we have $\frac{a^1_n} {a_n} \rightarrow 0$, therefore $\frac {R^1_n} {a_n}$ converges in distribution to $0$. 
Hence, we only need to examine 
\begin{align}
\lim_{n\rightarrow \infty}\mathbb{P}\left(\frac {R^2_n} {a_n} \leq x\right).
\end{align}
We begin by showing that for any $\varepsilon > 0$,
  \begin{equation}\label{t3:upperbound}
            \limsup_{n \rightarrow \infty}  \mathbb{P} \left( R^2_n > a_n x\right)\leq 1- \E[e^{-\zeta W(I)x^{-q}}]
    \end{equation}
Take $v \in \mathbb{T}_n$, $\sigma(v) \in \mathcal{C}_m$. Then
\[
S_v = S_v - S_{v^*} + S_{v^*} \leq \sum_{k=k^*(v) + 1}^{n}\max\left\{\xi_{v_{k}},0\right\} + n\max\limits_{i \neq I} M^i_n.
\]
Where $v^*$ is the last ancestor of $v$ from the preceding $m-1$ classes and $k^* = |v^*|$. Note that $X_v := \sum_{k=k^* + 1}^{n}\max\left\{\xi_{v_{k}},0\right\}$ is a random sum of independent random variables with regularly varying tails, where the heaviest tail is of the order $r_I$ (since now $I \in \mathcal{C}_m$). Taking maximum over $v$, we have
\[
R^2_n \leq R^*_n + n\max\limits_{i \neq I}M^i_n
\]
where \[R^*_n = \max_{\sigma(v) \in \mathcal{C}_m}X_v.\] 
Again by Remark \ref{t3:remark2}, \[\frac{n\max\limits_{i \neq I}M^i_n} {a_n} \xrightarrow[n \rightarrow \infty]{d} 0.\]
 Hence, we have reduced \eqref{t3:upperbound} to showing 
   \begin{equation}\label{t3:upperbound2}
            \limsup_{n \rightarrow \infty}  \mathbb{P} \left( R^*_n > a_n x\right)\leq 1- \E[e^{-\zeta W(I)x^{-q}}].
    \end{equation}
 Let \[
 M^*_n = \max_{i \in \mathcal{C}_M} M^i_n.
 \]
 By \eqref{t3:maxdisp1} and \eqref{t3:maxdisp2}, 
 \begin{align}\label{t3:eq5}
 \lim_{n\rightarrow \infty} \mathbb{P}\left(M^*_n \geq a_n x\right) =\E[e^{-\zeta W(I)x^{-q}}].
 \end{align}
 From here we proceed similarly to the irreducible case. Since
 \[
 \limsup_{n \rightarrow \infty}  \mathbb{P} \left( R^*_n > a_n x\right) \leq  \lim_{n\rightarrow \infty} \mathbb{P}\left(M^*_n \geq a_n x\right) + \mathbb{P} \left( R^*_n > a_n x, M^*_n \leq a_n(x-\varepsilon)\right),
 \]
 by \eqref{t3:eq5} we only need to show 
\begin{align*} \lim_{n \rightarrow \infty} \mathbb{P} \left( R^*_n > a_n x, M^*_n \leq a_n(x-\varepsilon)\right) = 0
\end{align*}

Let $X_n(a_n x,\infty) = \#\left\{v \ : \ |v| =n , \ \sigma(v) \in \mathcal{C}_m, \ X_v >a_n x\right\}$. Then
\begin{align*}
    &\mathbb{P} \left( R^*_n > a_n x, M^*_n \leq a_n(x-\varepsilon)\right)\leq \mathbb{E} \left[X_n(a_n x, \infty) \boldone_{\{M^*_n \leq a_n(x- \varepsilon)\}} \right] \\
    &\leq \sum_{l=1}^{n} \sum_{\vec{l } \ : \ |\vec{l}|_1=l}A_{\vec{l}}\left(F_{\vec{l}}^{(a_n (x - \varepsilon))}(\infty) - F_{\vec{l}}^{(a_n (x - \varepsilon))} (a_n x)\right)
\end{align*}
where the inner sum goes over all vectors $\vec{l} = (l_1, \dots l_{|\mathcal{C}_m|})$ satisfying $|\vec{l}|_1 = \sum_{j=1}^{|\mathcal{C}_m|}l_j = l$,
$A_{\vec{l}}(n)$ is the expected number of class $\mathcal{C}_m$ particles in the $n$-th generation, that had $l_j$ ancestors of each respective type in $\mathcal{C}_m$, and \[F_{\vec{l}}^y(x) = F_{d-m+1, l_1}^y \ast F^y_{d-m,l_2} \ast \dots \ast F^y_{d,l_m} (x)\]
Since the step distributions in $F_{\vec{l}}$ now only involve distributions with tails not heavier than $r_I=r$, we can follow the argument from the proof of \eqref{t1:eq8}, which yields for $x, \varepsilon, \delta >0$, $s \in (0,r)$ and $C > 0$,
\[
\left(F_{\vec{l}}^{(a_n (x - \varepsilon))}(\infty) - F_{\vec{l}}^{(a_n (x - \varepsilon))} (a_n x)\right) \leq C \left( \frac {l C_s} {a_{n}^s(x - \varepsilon)^s} \right)^{\frac {x(1-\delta)} {(x - \varepsilon)} }
\]
and as a consequence, for some $C' > 0$ and $p'>1$,
\begin{align*}
\sum_{l=1}^{n} \sum_{\vec{l } \ : \ |\vec{l}|_1=l}&A_{\vec{l}}\left(F_{\vec{l}}^{(a_n (x - \varepsilon))}(\infty) - F_{\vec{l}}^{(a_n (x - \varepsilon))} (a_n x)\right) \\&\leq C' \rho^{-n  {p'} }\mathbb{E}[|Z_n(m)|]= \leq C' \rho^{-n  {p'} } \sum_{j=1}^m (M^nZ_0)(d-m+j)
\end{align*}
By Jordan decomposition, for some $k>0$ and all $j=1,\dots,m$, $\rho^{-n}n^{-k}(M^nZ_0)(d-m+j)$ converges to a finite limit, hence
\[\lim_{n\rightarrow \infty}\mathbb{P} \left( R^*_n > a_n x, M^*_n \leq a_n(x-\varepsilon)\right) = 0\]This concludes the proof of \eqref{t3:upperbound2}, and in result \eqref{t3:upperbound}. 
To finish the proof of the Theorem we are left to show
\[
\liminf_{n \rightarrow \infty}\mathbb{P} ( R_n > a_n x) \geq 1- \E[e^{-\zeta W(I)x^{-q}}].
\]
Since
\[  \mathbb{P} ( R_n > a_n x) \geq \mathbb{P}(M_n > a_n (x+\varepsilon)) - \mathbb{P} ( R_n \leq a_n x, M_n > a_n(x+\varepsilon)).
\]
and, by \eqref{t3:maxdisp3},
 \begin{align}\label{t3:eq10}
 \lim_{n\rightarrow \infty} \mathbb{P}\left(M_n \geq a_n x\right) =\E[e^{-\zeta W(I)x^{-q}}],
 \end{align}
 the proof is reduced to showing
    \begin{equation}\label{th2:lowerbound}
            \mathbb{P} ( R_n \leq a_n x, M_n \geq a_n(x+\varepsilon))\xrightarrow[ n \rightarrow \infty]{} 0 .
    \end{equation}
Here we note that the arguments used to show \eqref{upperbound} in the proof of Theorem \ref{t1} were based solely on the logarithmic bound on the lower tails of the distributions and the exponential growth of $a_n$. Since these conditions are still satisfied, the calculations can be repeated with no significant modifications, concluding the proof of the Theorem.
\end{proof}

\subsection{Displacements with semi-exponential tails}
Analogously to the irreducible case, throughout this section we assume that the displacements are independent and admit semi-exponential tails:\begin{equation}\label{t4:semiexp}
    \mathbb{P}\left(\xi^j \geq t\right) = a_j(t) \exp\{-L_j(t)t^{r_j}\},
\end{equation} where $L_j, a_j$ are slowly varying functions such that $\frac {L_j(t)} {t^{1-r_j}}$ are eventually nonincreasing, and $r_j \in (0,1)$. We also assume that the displacements have finite moments.
  Let \begin{align*}
   &r =  \min\{r_j \ | \ j \in \mathcal{C} \}, \\ 
   &\mathcal{B} = \{ j \in \mathcal{C} \ | \ r_j=r\}, \\ 
   &\mathcal{A} = \{ a \preceq j \ | \ j \in \mathcal{B}\}\end{align*}
so that $\mathcal{A}$ is the set of classes preceding types that attain $r$. Our result is as follows.

\begin{theorem}\label{t4}
         Let $L(x) = \min\{L_j(x) \ | \ j \in \mathcal{B} \}$, $\rho = \max\limits_{a \in \mathcal{A}} \rho(a)$, and choose $\psi(n)$ to be a positive function satisfying
\begin{equation}\label{t4:norm}
\frac {L(\psi(n))\psi(n)^{r}} n \rightarrow 1.
\end{equation}

Then
\[
\frac {R_n} {\psi(n)} \xrightarrow{a.s.}  (\log \rho)^{\frac 1 {r}}.
\]

    \end{theorem}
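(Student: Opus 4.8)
The plan is to prove the matching almost-sure bounds $\limsup_n R_n/\psi(n)\le(\log\rho)^{1/r}$ and $\liminf_n R_n/\psi(n)\ge(\log\rho)^{1/r}$, combining the single-big-jump mechanism behind Theorem \ref{t2} with the block-triangular decomposition of the proof of Theorem \ref{t3}. With $\rho=\max_{a\in\mathcal{A}}\rho(a)$ and $r=\min\{r_j:j\in\mathcal{C}\}$, the structural fact driving everything is that the displacements which can reach the scale $\psi(n)$ are those carried by particles of a type in $\mathcal{B}$, and that the number of such particles having a descendant in generation $n$ grows only like $\rho^n$ up to a polynomial factor: indeed $\mathbb{E}[Y^j_n]$ grows like $\rho_j^{\,n}$ (polynomial correction), and $\max_{j\in\mathcal{B}}\rho_j=\max\{\rho(a):a\in\mathcal{A}\}=\rho$ by the definition of $\mathcal{A}$ and of $\rho_j$ in \eqref{kestenr}.

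\emph{Upper bound.} Replacing each $\xi_{v_k}$ by $\max(\xi_{v_k},0)$ only increases $R_n$, so write $S_v\le U_v+V_v$, where $U_v$ is the sum of the positive parts of the displacements over the non-$\mathcal{B}$ ancestors of $v$ and $V_v$ the corresponding sum over the $\mathcal{B}$ ancestors. Every non-$\mathcal{B}$ displacement is, after truncation at a fixed constant, stochastically dominated by a variable with semi-exponential tail of exponent $r':=\min\{r_j:j\notin\mathcal{B}\}>r$; bounding $\max_{|v|=n}U_v\le\max_{|v|=n}\sum_{k\le n}\eta'_k$, the Gantert-type estimate \eqref{gantertlemma} and a union bound over the (at most $\mathbb{E}[|Z_n|]$, growing like $\rho_{\max}^{\,n}$ up to a polynomial factor) generation-$n$ particles give $\max_{|v|=n}U_v\le\psi'(n)(\log\rho_{\max}+\varepsilon)^{1/r'}=o(\psi(n))$ eventually, since $1/r'<1/r$. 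For the main term observe that $\max_{|v|=n}V_v=\max\{\widetilde V_w:\sigma(w)\in\mathcal{B},\ w\text{ has a generation-}n\text{ descendant}\}$, where $\widetilde V_w$ is the sum of the positive parts of the displacements along the $\mathcal{B}$-ancestors of $w$, the maximum being attained at the deepest $\mathcal{B}$-ancestor of the extremal particle. Conditioning on the genealogy, each $\widetilde V_w$ is a sum of at most $n$ variables dominated (after truncation) by a variable with tail $\asymp a_{\max}(t)\exp\{-L(t)t^r\}$, so by \eqref{gantertlemma} the expected number of such $w$ with $\widetilde V_w\ge\psi(n)x$ is at most $\mathbb{E}\big[\sum_{j\in\mathcal{B}}Y^j_n\big]\,e^{-n(x^r+o(1))}$, which grows like $\rho^n e^{-n(x^r+o(1))}$ up to a polynomial factor and is summable once $x>(\log\rho)^{1/r}$. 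Borel–Cantelli then yields the upper bound.

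\emph{Lower bound.} I would follow the lower-bound scheme of Theorem \ref{t2}. Fix a type $I\in\mathcal{B}$ realizing the minimum defining $L$ together with a class $a^{\ast}\preceq I$ with $\rho(a^{\ast})=\rho$, and for $K>0$ restrict attention to the sub-tree $\mathbb{T}^K$ of paths all of whose displacements are $\ge-K$; its diagonal block over $a^{\ast}$ has Perron–Frobenius eigenvalue $\rho_K(a^{\ast})\uparrow\rho$, so take $K$ large. Applying Lemmas \ref{lemma} and \ref{lemma2} to the positively regular block over $a^{\ast}$ (for the $\mathbb{T}^K$-process, which still satisfies all standing assumptions) and then following a fixed $a^{\ast}$-to-$I$ chain, one shows that the number $Z^{K,I}_n$ of generation-$n$ particles of type $I$ in $\mathbb{T}^K$ is at least $(\rho(1-\varepsilon))^n$ outside a summable-probability event. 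For such particles the last step has tail $\mathbb{P}(\xi^I\ge\psi(n)y)\sim a_I(\psi(n)y)\exp\{-y^rL_I(\psi(n))\psi(n)^r\}$ with $L_I(\psi(n))\psi(n)^r\sim L(\psi(n))\psi(n)^r\sim n$ by \eqref{t4:norm} and the choice of $I$, so $\mathbb{E}[\exp\{-Z^{K,I}_n\,\mathbb{P}(\xi^I\ge\psi(n)y)\}]$ is summable whenever $y^r<\log(\rho(1-\varepsilon))$; Borel–Cantelli produces, a.s.\ for all large $n$, a generation-$n$ particle of $\mathbb{T}^K$ with $S_v\ge\psi(n)y-(n-1)K$. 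Dividing by $\psi(n)$, using $n=o(\psi(n))$, and letting $\varepsilon\to0$, $K\to\infty$ gives $\liminf_n R_n/\psi(n)\ge(\log\rho)^{1/r}$.

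\emph{Main obstacle.} The delicate point is the reducible bookkeeping. In the upper bound, recovering the constant $\log\rho$ rather than the larger $\log\rho_{\max}$ hinges precisely on the observation above — that only $\mathcal{B}$-typed displacements matter and that they sit on a sub-population of size $\rho^n$ — so the union bound must be taken over $\mathcal{B}$-particles rather than over all of $\mathbb{T}_n$; in the lower bound, one must locate a diagonal block realizing $\rho$ that feeds a type realizing $L$, and transfer the small-value estimates of Lemmas \ref{lemma}–\ref{lemma2} from that irreducible block to the reducible process. The remaining ingredients are routine adaptations of the arguments in the proofs of Theorems \ref{t2} and \ref{t3}; alternatively one can argue by induction on the number $m$ of communicating classes, splitting $R_n=\max(R^1_n,R^2_n)$ as in the proof of Theorem \ref{t3} and comparing the two speeds.
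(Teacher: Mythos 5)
Your proposal is correct and runs on the same analytic engine as the paper -- positive-part reduction, stochastic domination by the heaviest semi-exponential tail, Gantert's large-deviation estimate \eqref{gantertlemma}, a first-moment union bound with Borel--Cantelli for the upper bound, and the $-K$ trimming plus small-value estimates for the population for the lower bound -- but your upper bound is organized differently. The paper splits the \emph{population} into the block of classes preceding $\mathcal{B}$ (after a renumbering, the sub-process with mean matrix $M_\theta$, whose top eigenvalue is $\rho$) and the remaining classes, bounding $R^\theta_n$ by a union bound over $|Z^\theta_n|\asymp n^k\rho^n$ particles and then adding the post-$\theta$ increments $X_v$ at the lighter scale $\Psi(n)=o(\psi(n))$; you instead split each \emph{path sum} by the type of the displacement, $S_v\le U_v+V_v$, and control the $\mathcal{B}$-part through the genealogical counts $Y^j_n$ of $\mathcal{B}$-particles with a generation-$n$ descendant, using $\mathbb{E}[Y^j_n]\lesssim n^{C}\rho_j^n$ and $\max_{j\in\mathcal{B}}\rho_j=\rho$. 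Your bookkeeping is arguably cleaner: it pins the union bound on exactly the displacements that can reach scale $\psi(n)$ and avoids the class renumbering and the identification of $\rho$ as the top eigenvalue of $M_\theta$, at the cost of needing the (easy) first-moment bound on $Y^j_n$, which the paper only develops in a.s.\ form (Lemmas \ref{totalpopulation} and \ref{t3:lemmatotal}) for the regularly varying case; the paper's block decomposition keeps all estimates at the level of sub-processes with explicit mean matrices, so the Jordan-form eigenvalue bounds are immediate and the structure parallels the induction used for Theorem \ref{t3}. Your lower bound coincides with the paper's: restricting to $\mathbb{T}^K$, you re-derive the needed exponential lower bound on $Z^{K,I}_n$ from Lemmas \ref{lemma} and \ref{lemma2} by following an $a^{\ast}$-to-$I$ chain, which is exactly the content (and the binomial/Chernoff computation) of the paper's Lemma \ref{lemma3}. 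One remark: your selection of a pair $I\in\mathcal{B}$ realizing $L$ together with $a^{\ast}\preceq I$ realizing $\rho$ presupposes that the two extremes are attained compatibly; the paper's own lower bound makes the same implicit selection, so this is faithful to the source rather than a gap in your argument.
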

    \begin{remark}\label{t4:remark}
    \emph{As in the irreducible case, the existence of $\psi(n)$ satisfying \eqref{t4:norm} is guaranteed by the result of de Bruijn \cite{DeBruijn}. Indeed, if $K(x)$ is the de Bruijn conjugate of $x \mapsto L\left(x^{\frac 1 r}\right)$, then we can take$\psi(n)=K(n)^{\frac 1 r}n^{\frac 1 r}$. In particular, this implies that for any $\varepsilon > 0$, \[
    n^{\frac 1 r (1-\varepsilon)} \leq \psi(n) \leq n^{\frac 1 r (1+\varepsilon)}
    \]
    for large enough $n$.
    }
\end{remark}
The main difference between this Theorem and Theorem \ref{t2} is that $\rho$ is not necessarily the principal eigenvalue of $M$. This is because the limit behavior is driven by the heaviest tail and the asymptotic number of particles attaining it. In the irreducible case, all types share the same growth rate, but as seen in \cite{kesten2}, the growth rate of particles of any given type is also driven by the types preceding it. To illustrate the issue, consider the following heuristic argument: start with an irreducible process as class 1, and append to it another process as class 2, which follows class 1. Denote by $r_1$ and $r_2$ the heaviest tails that appear in classes 1 and 2, respectively. Then there are 3 cases to consider: 
\begin{enumerate}
    \item $r_2 < r_1$. Then the normalization factor $\psi(n)$ should adhere to class 2, as it has a heavier maximum tail. As the number of class 2 particles grows as $\max\{\rho(1),\rho(2)\}^n$, $\rho$ in Theorem \ref{t4} is, in fact, the principal eigenvalue of $M$. It is perhaps worth noting that the number of class 2 particles can even grow as $n^k\rho^n$ for some $k>0$ if $\rho_1=\rho_2$ (see \cite{kesten2} for details), but this is of no consequence here, since the bounds used in our proof work on exponential scale.
    \item $r_2 = r_1$. Since both classes attain the heaviest tail in the process, this is in essence the same as case 1. The only difference lies in $\psi(n)$, but it is limited to a slowly varying function.
    \item $r_2 > r_1$. Since the normalizing factor $\psi(n)$ grows as $n^{\frac 1 {r_1}}$, class 2 is essentially irrelevant. In this case $\rho=\rho(1)$ even if $\rho(2)$ is greater.

\end{enumerate}
This argument can be iterated by adding more classes. This should provide some intuition useful for understanding the rationale behind the structure of the proof. The key ingredient will be the following lemma.

\begin{lemma}\label{lemma3}
Denote the number of particles of class $a$ in the $n$-th generation as $Z_n(a)$. If $a \preceq j$, then, for any $\varepsilon > 0$, there exist $k \in \mathbb{N}$, $\delta >0$ and $\beta,\gamma \in (0,1)$, such that for all $n$ large enough 
     \begin{equation}\label{l3:1}
         \mathbb{P}\left(Z^j_n \leq \delta  Z_{n-k}(a)\right) \leq \beta^n.
     \end{equation}
       and
    \begin{equation}\label{l3:2}
        \mathbb{P}\left(\frac{Z_{n}^j} {\rho(a)^n}< (1-\varepsilon)^n\right) \leq \gamma^n
    \end{equation}

\end{lemma}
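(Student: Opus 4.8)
\textbf{Proof proposal for Lemma~\ref{lemma3}.} The plan is to deduce both \eqref{l3:1} and \eqref{l3:2} from the behaviour of class~$1$, which survives almost surely and grows geometrically at rate $\rho(1)>1$ (so that Lemmas~\ref{lemma} and \ref{lemma2} apply to the irreducible class-$1$ subprocess), by pushing its mass forward along the chain $1\preceq a\preceq j$. I would first prove \eqref{l3:1} by adapting the argument of Lemma~\ref{lemma2}, and then reduce \eqref{l3:2} to a geometric lower bound for $Z_{n-k}(a)$, which carries the real content.

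\emph{Step 1: choice of $k$ and proof of \eqref{l3:1}.} Since $a\preceq j$, there are $i_0\in\mathcal C_a$ and $k_0$ with $M^{k_0}(i_0,j)>0$; as $M[a]^{\,l}$ has only positive entries for a suitable $l$ (by \eqref{as4}), we get $M^{\,l+k_0}(t,j)>0$ for \emph{every} $t\in\mathcal C_a$, so we set $k=l+k_0$ and put $q=\min_{t\in\mathcal C_a}\mathbb P(\text{a type-}t\text{ particle has a type-}j\text{ descendant }k\text{ generations later})>0$. Conditioning on the class-$a$ configuration at time $n-k$, the number of their type-$j$ descendants at time $n$ stochastically dominates a sum of $Z_{n-k}(a)$ independent copies of a random variable that is $\ge 1$ with probability $\ge q$; running the binomial/Vandermonde computation of Lemma~\ref{lemma2} verbatim then yields, for suitable $\delta>0$ and $\beta_0<1$,
\[
\mathbb P\big(Z^j_n<\delta\, Z_{n-k}(a)\big)\le \mathbb E\big[\beta_0^{\,Z_{n-k}(a)}\big]\le \beta_0^{\,n}+\mathbb P\big(Z_{n-k}(a)<n\big),
\]
so \eqref{l3:1} follows once $\mathbb P(Z_{n-k}(a)<n)$ is shown to decay geometrically — a very weak case of Step~2.

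\emph{Step 2: geometric lower bound for $Z_n(a)$.} I claim that for every $\varepsilon>0$ there is $\gamma_0<1$ with $\mathbb P\big(Z_n(a)<(1-\varepsilon)^n\rho(a)^n\big)\le\gamma_0^{\,n}$ for large $n$; combined with Step~1 (absorbing the factor $\delta$ by slightly enlarging $\varepsilon$) this gives \eqref{l3:2} as well. The idea is that even though the $\mathcal C_a$-subprocess from a single particle survives only with probability $p_a<1$, class~$1$ re-seeds $\mathcal C_a$ geometrically often: fixing $t_1\in\mathcal C_1$ and $k_1$ with $M^{k_1}(t_1,t)>0$ for some $t\in\mathcal C_a$, Lemmas~\ref{lemma} and \ref{lemma2} for the class-$1$ subprocess give $Z^{t_1}_m(1)\ge(1-\varepsilon')^m\rho(1)^m$ off a geometrically small event, and a Chernoff bound over these independent particles produces at least $\rho(1)^{m}/2$ class-$a$ particles by time $m+k_1$, again off a geometrically small event. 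If $\rho(a)\le\rho(1)$ this already forces $Z_n(a)\gtrsim\rho(1)^n\ge(1-\varepsilon)^n\rho(a)^n$ and we are done. If $\rho(a)>\rho(1)$, take $m=\lceil\epsilon n\rceil$ for a small $\epsilon=\epsilon(\varepsilon)$; then at time $m$ there are $N\ge\rho(1)^{\epsilon n}/2$ class-$a$ particles, each starting an independent $\mathcal C_a$-subprocess, and since $Z^{(i)}_{(1-\epsilon)n}(a)/\rho(a)^{(1-\epsilon)n}$ converges in law to a limit positive with probability $p_a$, one has $\mathbb P\big(Z^{(i)}_{(1-\epsilon)n}(a)\ge\eta_0\rho(a)^{(1-\epsilon)n}\big)\ge p_0>0$ uniformly in $n$. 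A Chernoff bound over the $N$ independent seeds then gives $Z_n(a)\ge\sum_{i=1}^N Z^{(i)}_{(1-\epsilon)n}(a)\ge c\,\rho(1)^{\epsilon n}\rho(a)^{(1-\epsilon)n}$ off a geometrically small event, and choosing $\epsilon$ small enough that $(\rho(1)/\rho(a))^{\epsilon}>1-\varepsilon$ makes this at least $(1-\varepsilon)^n\rho(a)^n$ for large $n$, proving the claim.

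\emph{Main obstacle.} The delicate point is exactly the amplification in Step~2 when $\rho(a)>\rho(1)$: the class-$a$ subprocess need not survive, so Lemma~\ref{lemma} cannot be invoked for class $a$ directly; one must instead exploit the geometrically many independent re-seedings supplied by class~$1$ and balance the number of seeds (of order $\rho(1)^{\epsilon n}$) against the number of generations left for them to grow (contributing $\rho(a)^{(1-\epsilon)n}$) — which is precisely why the statement only claims the rate $\rho(a)^n$ up to the harmless factor $(1-\varepsilon)^n$. Everything else — the binomial estimate from Lemma~\ref{lemma2} and the geometric large-deviation bounds of Lemmas~\ref{lemma} and \ref{lemma2} for the irreducible diagonal blocks — is used essentially as a black box.
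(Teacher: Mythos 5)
Your argument is correct in substance, but it follows a genuinely different route from the paper. The paper proves Lemma~\ref{lemma3} by induction on the number of classes: it peels off the last class, reruns the binomial/Vandermonde computation of Lemma~\ref{lemma2} to compare $Z^j_n$ with $Z^i_{n-k}$ for a single type $i$ in an earlier class $a$, and then feeds in the inductive instance of \eqref{l3:2} for that earlier class, the base case being the irreducible Lemmas~\ref{lemma} and~\ref{lemma2}; all quantitative growth is thus carried by the inductive hypothesis. You avoid the induction entirely: your Step~1 is the same binomial domination (simplified, since a single uniform success probability $q$ over $\mathcal{C}_a$ suffices), while your Step~2 proves the lower-deviation bound for $Z_n(a)$ directly, using class~$1$ (a.s.\ surviving, controlled by Lemmas~\ref{lemma} and~\ref{lemma2}) as a geometric source of independent class-$a$ seeds and then, when $\rho(a)>\rho(1)$, the Kesten--Stigum limit for the block $M[a]$ (available by \eqref{as4} and \eqref{as6}) together with a Chernoff bound over the $\approx\rho(1)^{\epsilon n}$ independent seeds, balancing the seeding time $\epsilon n$ against the remaining growth $\rho(a)^{(1-\epsilon)n}$. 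What each approach buys: the paper's induction is shorter and recycles one computation, but it treats the case where $a$ is $j$'s own class (in particular the anomalous situation $\rho(a)>\max_{b\prec a}\rho(b)$) only very tersely, whereas your seeding argument makes the source of the $\rho(a)^n$ growth explicit and handles that case head-on, at the price of invoking the positivity of the Kesten--Stigum limit for the block and a two-case analysis. One small imprecision to fix: deferring $\mathbb{P}\left(Z_{n-k}(a)<n\right)$ to ``a very weak case of Step~2'' is literally valid only when $(1-\varepsilon)\rho(a)>1$; if $\rho(a)\leq 1$ the stated claim of Step~2 does not imply it, but the $\rho(a)\leq\rho(1)$ branch of your own argument actually yields the stronger rate $(1-\varepsilon)^n\rho(1)^n$ with $\rho(1)>1$, so you should quote that bound instead --- a cosmetic repair, not a gap.
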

\begin{proof}[Proof of the Lemma \ref{lemma3}]
    We prove the lemma by induction. First observe that if $m=1$, the model reduces to the irreducible one considered in previous section and the statement of the lemma is an immediate consequence of Lemmas \ref{lemma} and \ref{lemma2}. Now assume that it holds for processes with $m$ classes and consider one with $m+1$ classes.
    Observe that the statement follows immediately from the induction assumption if $j \in \mathcal{C}_l$ for some $l\leq m$, as the last type does not contribute to the previous ones, hence we only consider the case when $j \in \mathcal{C}_{m+1}$. We start by showing \eqref{l3:1}. First note that it is trivially true if $a = m+1$, so assume $a \leq m$. Fix $i \in \mathcal{C}_a$ and $k \in \mathbb{N}$ such that $M^k(i,j) > 0$ (recall that $a \preceq j$ means that such $k$ exists for all $i \in \mathcal{C}_a$). Since \eqref{l3:1} holds for first $m$ classes, the problem can be reduced to showing existence of $\beta \in (0,1)$ satisfying
    \begin{equation}\label{l3:eq1}
       \mathbb{P}\left(Z^j_n < \delta  Z^i_{n-k}\right) \leq \beta^n.
    \end{equation}
   The proof is again similar to that of Lemma \ref{lemma2}. We denote by $Z_k^{i \rightarrow j}$ a random variable distributed as $Z_k^j$ under $\mathbb{P}_i$. Consequently, $q = \mathbb{P} \left( Z_k^{i \rightarrow j} = 0 \right) < 1$. Observe
    \begin{align*}
            \mathbb{P}\left(Z^j_n < \delta  Z^i_{n-k} \right) &= \mathbb{P} \left( \sum_{r \in \mathcal{C}} \sum_{l = 1}^{Z^r_{n-k}} Z_k^{i \rightarrow j} (l)< \delta  Z^i_{n-k} \right) \\&\leq  \mathbb{P} \left(  \sum_{l = 1}^{Z^i_{n-k}} Z_k^{i \rightarrow j} (l)< \delta  Z^i_{n-k} \right) = \mathbb{E} \left[\Phi\left(Z_{n-k}^i\right)\right]
    \end{align*}
where 
\[
\Phi(s) =  \mathbb{P} \left(  \sum_{l = 1}^{s} Z_k^{i \rightarrow j}(l) < \delta  s \right).
\]
and $\{Z_k^{i \rightarrow j} (l)\}_{l >0} $ are indpendent copies of $Z_k^{i \rightarrow j}$. 
If $q = 0$, the statement of the Lemma is trivially true with $\delta = 1$. Hence, we only consider the case where $q \in (0,1)$. Denote $K_n = \#\{l \leq s: Z_k^{i \rightarrow j}(l) > 0\}$. Then 
 \begin{align*}
     \Phi(s) = &\mathbb{P}\left(\sum_{l=1}^{s} Z_k^{i \rightarrow j}(l) < \delta s \right) = \sum_{t=0}^\infty \mathbb{P}\left(\sum_{l=1}^{s} Z_k^{i \rightarrow j}(l) < \delta s \ , \ K_n =t\right) \\&\leq \sum_{t=0}^\infty \mathbb{P}(t <\delta s,K_n=t) = \sum_{t=0}^{\lfloor \delta s \rfloor} \binom{s}{t} (1- q)^t q^{s-t} \leq q^s\sum_{t=0}^{\lfloor \delta s \rfloor}\frac {(\delta s)^t} {t!} \left(\frac {1-q} {\delta q}\right)^t
 \end{align*}
 Choosing $\delta > 0$ small enough so that \[
 \frac {1-q} {\delta q} > 1,\] and \[
 \beta_0 = q \left(\frac {1-q} {\delta q}\right)^{\delta} < 1.
 \]we get the bound 
 \[
 \Phi(s) \leq \beta_0^s.
 \]
Then 
     \begin{align*}
    \mathbb{E} \left[\Phi\left(Z_{n-k}^i\right)\right] \leq \mathbb{E}\left[\beta_0^{Z_{n-k}^i} \right] \leq \beta_0^n + \mathbb{P}\left(Z_{n-k}^i< n\right).
    \end{align*}
    Since $i \in \mathcal{C}_a$ for $a \leq m$, $\mathbb{P}\left(Z_{n-k}^i< n\right)$ decays exponentially fast by induction assumption, which ends the proof of \eqref{l3:1}. Having proved that, \eqref{l3:2} follows easily:
    \begin{align*}
        \mathbb{P}\left(\frac {Z_{n}^j} {\rho(a)}< (1-\varepsilon)^n\right) \leq \beta^n + \mathbb{P}\left(\frac {\delta Z_{n}^i} {\rho(a)}< (1-\varepsilon)^n\right)  \leq \beta^n + \gamma_0^{\frac n {\delta}} \leq \gamma^n
    \end{align*}
    for appropriate choice of $\gamma \in (0,1)$ and large enough $n$.
    \end{proof}

\begin{proof}[Proof of the Theorem \ref{t4}]

We start with the upper bound. For this part, we can assume without loss of generality (replacing $\xi^i$ with $\max\{\xi^i,0\}$) that all $\xi^i$'s are nonnegative. Denote \[\theta = \max \{\alpha \ | \ \mathcal{C}_\alpha \subset \mathcal{B}\}.\] Note that we may assume that if $\alpha$ is not comparable (with respect to relation $\preceq$) with any class contained in $\mathcal{B}$, then $\alpha > \theta$. This is achieved by simple renumbering of some classes and is consistent with the ordering we assumed in \eqref{matrix}. Let \[R^\theta_n = \max \{ S_v \ | \ |v|=n, \ \sigma(v) \in \mathcal{C}_\alpha, \ \alpha \leq \theta\}\]

Denote by $M_\theta$ the minor of $M$ that includes only the subset of classes $\mathcal{C}_{\sim}^\theta = \{\mathcal{C}_1, \mathcal{C}_2, \dots, \mathcal{C}_\theta\}$, so
\begin{equation}\label{matrix2}
    M_\theta = \begin{pmatrix}
        M[1] & M[1,2] & \dots & M[1,\theta] \\
        0 & M[2] & \dots & M[2,\theta]\\
        \vdots & \vdots & \vdots & \cdots \\
        0 & \dots & 0 & M[\theta]
    \end{pmatrix}
    .
\end{equation}
Since the subsequent classes do not contribute to the previous ones, the sub-process consisting only of particles of these types is a multi-type Galton-Watson process with mean matrix $M_\theta$. 
We now repeat the construction from the first part of the proof of Theorem \ref{t2}. Recall that we bounded the tails of $S_v$ by a tail of an i.i.d. sum of random variables with the heaviest tails. When applied to the considered subprocess, this yields the following inequality. For any $\varepsilon > 0$, there is $\delta >0 $ such that for all large enough $n$,
    \begin{equation}
        \mathbb{P}\left(S_v \geq \psi(n) (\log \rho + \varepsilon)^{\frac 1 r}\right) \leq \exp \left\{ -n(\log \rho + \delta) \right\} = \rho^{-n} e^{ -n \delta}
    \end{equation} 
    for any $v \in \mathbb{T}^\theta_n = \{v \in \mathbb{T}_n \ | \ \sigma(v)  \in \mathcal{C}_\alpha, \alpha \leq \theta \ \}$.  
Calculation analogous to \eqref{calc} yields the inequality
\begin{equation}\label{calc2}
\begin{aligned}
    &\mathbb{P}\left(\exists v \in \mathbb{T}_n : S_v \geq \psi(n) (\log \rho + \varepsilon)^{\frac 1 r}\right) \leq 
    \mathbb{E}[|Z^\theta_n|] \rho^{-n} e^{ -n \delta},
\end{aligned}
\end{equation}
where $|Z^\theta_n|$ is the number of particles in our sub-process. Now observe that
\[
\mathbb{E}[|Z^\theta_n|] = \mathbb{E}\left[\left|M^n_\theta\left(Z_0\right)\right|\right] = \left|M^n_\theta\left(Z_0\right)\right|.
\]
As $\rho$ is the largest eigenvalue of $M_\theta$, by bringing $M_\theta$ to its Jordan form we see that for some $k\leq d$, $n^{-k}\rho^{-n} M_\theta^n$ has a finite limit, hence 
\[\mathbb{E}[|Z^\theta_n|] \rho^{-n} e^{ -n \delta} \leq C e^{-n\delta'}\]
for some $C>0, \delta' >0$, which by the Borel-Cantelli lemma proves
\begin{equation}\label{t4:lim}
    \limsup_{n \rightarrow \infty} \frac {R^\theta_n} {\psi(n)} \leq \left(\log \rho\right)^{\frac 1 r}
\end{equation}

Now we consider
\begin{equation}\label{matrix3}
    M_c = \begin{pmatrix}
        M[\theta + 1] & M[\theta+1,\theta+2] & \dots & M[\theta+1,m] \\
        0 & M[\theta+2] & \dots & M[\theta+2,m]\\
        \vdots & \vdots & \vdots & \cdots \\
        0 & \dots & 0 & M[m]
    \end{pmatrix}
\end{equation}
Observe that if $v \in \mathbb{T}_n / \mathbb{T}^\theta_n$, then it necessarily has the last ancestor from $\mathbb{T}^\theta_{k_v}$ (as the initial particle comes from the first class) for some $k_v$. We denote this ancestor $v^\theta$. Hence
\begin{equation} \label{t4:eq1}
    \frac{S_v} {\psi(n) }  = \frac{S_v - S_{v^\theta} + S_{v^\theta}} {\psi(n)}   \leq \frac{\sum_{i = k_v +1}^{n} \xi_{v_i} + R^\theta_{k_v}} {\psi(n)} \leq \frac{X_v}{\psi(n)} + \frac{R^\theta_{n}} {\psi(n)}
\end{equation}
where $\sum_{i = k_v +1}^{n} \xi_{v_i}$. Taking maximum over $v \in \mathbb{T}_n / \mathbb{T}^\theta_n$ and letting $n\rightarrow \infty$ in \eqref{t4:eq1}, we see that with \eqref{t4:lim}, we only need to show 
\begin{align}\label{t4:eq2}
\frac{R^*_n}{\psi(n)}\xrightarrow[n\rightarrow \infty]{a.s.} 0,\end{align} 
where
\[R^*_n = \max_{v \in \mathbb{T}_n / \mathbb{T}^\theta_n} X_v.\] 
From here we again deploy the strategy used in the proof of \eqref{calc}.  Let 
\begin{align*}&r' = \min \{r_i \ | \ i \in \bigcup_{\alpha > \theta} \mathcal{C}_\alpha \} > r \\
&L'(x) = \min \{ L_i(x) \ | \ i \in \bigcup_{\alpha > \theta} \mathcal{C}_\alpha, r_i = r'\} \\&\rho' = \max \{ \rho(\alpha) \ | \ \alpha > \theta\}.
\end{align*} furthermore, let $\Psi$ be a function satisfying
\begin{equation}\label{norm3}
\frac {L'(\Psi(n))\Psi(n)^{r'}} n \rightarrow 1.
\end{equation} Analogously to \eqref{calc}, we bound
\[\mathbb{P}\left( X_v > x\right)\leq \mathbb{P}\left(\tilde{X}_n > x\right)
\]
where 
\[
\tilde{X}_n = \sum_{i=1}^n \tilde{\xi}_i
\]
is a sum of independent nonnegative random variables with cumulative distribution function $F$, satisfying
    \[
      a_1(x)\exp\{-L'(x)x^{r'}\} \leq1-F(x) \leq a_2(x)\exp\{-L'(x)x^{r'}\} 
    \] 
    for some slowly varying $a_1, a_2$. As a consequence, by Theorem 3 from \cite{Gantert2000}, for any $\varepsilon > 0$, there is $\delta >0 $ such that for all large enough $n$,
    \begin{equation}
        \mathbb{P}\left(X_v \geq \{\Psi(n) (\log \rho' + \varepsilon)^{\frac 1 {r'}}\right) \leq \exp \left\{ -n(\log \rho' + \delta) \right\} = (\rho')^{-n} e^{ -n \delta}.
    \end{equation}
    As a consequence,
    \[ \mathbb{P}\left(\exists v \in \mathbb{T}_n : X_v \geq \{\Psi(n) (\log \rho' + \varepsilon)^{\frac 1 {r'}}\right) \leq \mathbb{E}\left[\sum_{\alpha > \theta} Z_n(\alpha)\right] (\rho')^{-n} e^{ -n \delta}.
    \]
Since  \[\mathbb{E}\left[\sum_{\alpha > \theta} Z_n(\alpha)\right] = \sum_{i \in \bigcup_{\alpha > \theta} \mathcal{C}_\alpha} M_n(i) \]
Again by Jordan decomposition, for all \[i \in \bigcup_{\alpha > \theta} \mathcal{C}_\alpha,\]
and some $k>0$,
$n^{-k} (\rho')^{-n}M_n(i)$ converges to a finite limit. We conclude
\[
\limsup_{n \rightarrow \infty} \frac{R*_n}{\Psi(n)} \leq \left(\log \rho'\right)^{\frac 1 {r'}}\ \ \text{ a.s.}
\]
By Remark \ref{t4:remark}, since $r<r'$, we have
\[
\frac {\Psi(n)} {\psi(n)} \xrightarrow[n\rightarrow \infty]{} 0
\]
therefore
\begin{equation}\label{t4:eq3}
    \limsup_{n \rightarrow \infty} \frac {R^*_n} {\psi(n)} \leq 0\ \ \text{ a.s.}
\end{equation}
Thus we have proved
\[
    \limsup_{n \rightarrow \infty} \frac {R_n} {\psi(n)} \leq \left(\log \rho\right)^{\frac 1 {r}}\ \ \text{ a.s.}
\]

For the lower bound we again apply the trimming procedure used in the proof of Theorem \ref{t2} and reduce the problem to the convergence of the series 
    \begin{align*}\label{sum}
        \sum_{n=1}^\infty \mathbb{P}\left(\frac {Z^{I}_n} {\rho^n} \leq \left(1-\frac {\varepsilon_1} 2\right)^n \right) 
    \end{align*}
where $I \in \mathcal{B}$. Recall that $\rho$ was the maximum eigenvalue among the classes followed by the types in $\mathcal{B}$, so applying Lemma \ref{lemma3} (more specifically \eqref{l3:2}) ends the proof.

\end{proof}

\end{document}